\NewCommandCopy{\proofqedsymbol}{\qedsymbol}
\newcommand{\exampleqedsymbol}{$\diamond$}
\theoremstyle{definition}
\theoremstyle{plain}
\newtheorem{theorem}{Theorem}[section]
\newtheorem{lemma}{Lemma}[section]
\theoremstyle{remark}
\newtheorem{claim}{Claim}[theorem]
\theoremstyle{definition}
\newtheorem{definition}{Definition}[section]
\newtheorem{example}{Example}[section]
\renewcommand{\qedsymbol}{\exampleqedsymbol}%
\def\makeautorefname#1#2{\expandafter\def\csname#1autorefname\endcsname{#2}}
\let\fullref\autoref
\let\c@lemma=\c@theorem 
\let\c@proposition=\c@theorem 
\let\c@corollary=\c@theorem 
\let\c@definition=\c@theorem 
\let\c@example=\c@theorem 
\let\c@question=\c@theorem 
\let\c@numberedremark=\c@theorem 
\newenvironment{claimproof}[1][\unskip]{\vspace{1ex}\noindent{\it
Proof of Claim #1:}\hspace{0.5em}}{\hfill$\lozenge$\vspace{1ex}}
\newcommand{\from}{\colon\thinspace} 
\DeclareMathOperator{\lk}{lk}
\DeclareMathOperator{\st}{st}
\DeclareMathOperator{\supp}{supp}
\DeclareMathOperator{\hull}{hull}
\newcommand{\diag}{\boxslash}
\newcommand{\join}{*}
\newcommand{\edge}{\mathbin{\tikz[baseline=-\the\dimexpr\fontdimen22\textfont2\relax
    ]{\filldraw (0,0) circle (.5pt) (.2,0) circle (.5pt);\draw
      (0,0)--(.2,0);}}}
\newcommand{\longdash}{\mathbin{\tikz[baseline=-\the\dimexpr\fontdimen22\textfont2\relax ]{\draw (0,0)--(1,0);}}}
\renewcommand{\setminus}{-}
\newcommand{\odd}{\mathbb{O}}
\newcommand{\even}{\mathbb{E}}
\newcommand{\free}{\mathbb{F}}
\begin{document}
\title[RACGs QI to tree RAAGs]{RAAGedy right-angled Coxeter groups II:
  in the quasiisometry class of the tree RAAGs}
\author{Christopher H.\ Cashen}
\address{Faculty of Mathematics\\University of  Vienna\\Oskar-Morgenstern-Platz 1\\1090 Vienna, Austria\\\href{https://orcid.org/0000-0002-6340-469X}{\includegraphics[scale=.75]{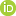}  0000-0002-6340-469X}}
\email{christopher.cashen@univie.ac.at}
\date{June 10, 2025}
\thanks{This research was funded  by the Austrian Science Fund (FWF)
  \href{https://doi.org/10.55776/P34214}{10.55776/P34214}.
} 
\keywords{Right-angled Coxeter group, right-angled Artin group, RACG,
  RAAG, quasiisometry}
\subjclass{20F65,20F55}

\begin{abstract}
We classify two-dimensional right-angled
Coxeter groups that are quasiisometric to a
right-angled Artin group defined by a tree, and show that
when this is true the right-angled Coxeter group actually contains a
visible finite index right-angled Artin subgroup.

\end{abstract}

\maketitle
\thispagestyle{tp}

\section{Introduction}\label{sec:intro}
A \emph{tree RAAG} is a right-angled Artin group whose presentation
graph is a tree of diameter at least three.
Such a group is the fundamental group of a compact 3--manifold
with boundary whose JSJ decomposition, in the 3--manifold sense,
contains only Seifert fibered pieces.
A corollary of Behrstock and Neumann's \cite{BehNeu08} quasiisometry
classification of graph manifolds is that all tree RAAGs belong to a
single quasiisometry class.
The diameter condition excludes the
well-understood quasiisometry classes $\mathbb{Z}$, $\mathbb{Z}^2$, and
$\free_2\times\mathbb{Z}$. 

Nguyen and Tran \cite{NguTra19} classified planar, triangle-free
graphs $\Gamma$ such that the right-angled Coxeter group (RACG)
$W_\Gamma$ with presentation graph $\Gamma$ is in the quasiisometry
class of the tree RAAGs.
Dani and Levcovitz \cite{DanLev24} subsequently showed that when
Nguyen and Tran's conditions are satisfied then $W_\Gamma$ actually 
has a finite index tree RAAG subgroup that is \emph{visible} in
$\Gamma$, in a sense that will be made precise later.

Nguyen and Tran's proof uses planarity in an essential way twice: once
to invoke the Jordan Curve Theorem to construct a certain
suspension-decomposition of $\Gamma$, and again to say that $W_\Gamma$
is virtually a 3--manifold group.
Then their suspension-decomposition corresponds to the JSJ
decomposition of the manifold and they apply Behrstock and
Neumann's result.
Dani and Levcovitz's argument also explicitly uses a planar embedding
of $\Gamma$. 
However, there are easy examples, such as the graph in \fullref{fig:nonplanar_raag_tree}, of nonplanar graphs $\Gamma$ such that
$W_\Gamma$ contains a finite index tree RAAG subgroup (see \fullref{ex:nonplanar_raag_tree_redux}), so planarity is not a
necessary condition.  
 \begin{figure}[h]
    \centering
\begin{tikzpicture}
    \coordinate (0) at (0,0);
    \coordinate (1) at (0,.5);
    \coordinate (2) at (0,1);
    \coordinate (3) at (-1,1);
    \coordinate (4) at (-1,0);
    \coordinate (5) at (1,.5);
    \coordinate (6) at (-.5,.75);
    \coordinate (7) at (-.5,.25);
    \coordinate (8) at (.5,.5);

    \filldraw (0) circle (1pt) (1) circle (1pt) (2) circle (1pt) (3)
    circle (1pt) (4) circle (1pt) (5) circle (1pt) (6) circle (1pt)
    (7) circle (1pt) (8) circle (1pt);
    \draw (0)--(1)--(2)--(4)--(7)--(1)--(8)--(5)--(0)--(3)--(2)
    (3)--(6)--(1) (0)--(4) (2)--(5);
  \end{tikzpicture}
  \caption{A nonplanar, triangle-free graph $\Gamma$ such that $W_\Gamma$ is
      in the quasiisometry class of the tree RAAGs.}
    \label{fig:nonplanar_raag_tree}
  \end{figure}

  We give a complete classification of triangle-free graphs $\Gamma$ such that $W_\Gamma$ is in the
  quasiisometry class of the tree RAAGs, and show that in this case $W_\Gamma$
  contains a visible finite index tree RAAG subgroup.
See \fullref{maintheorem}.
The proof uses the group theoretic version of JSJ
decompositions (over 2--ended subgroups).
Such decompositions of RAAGs and RACGs can be read
  off from their presentation graphs.
One step in the proof is to describe what the JSJ decomposition of a
group in the quasiisometry class of the tree RAAGs must look like. 
The second is to show that, for a 2--dimensional RACG, these
conditions are already strong enough to produce a visible finite index RAAG subgroup.

There are infinitely many commensurability classes of tree
RAAGs \cite{CasKazZak19,CasKazZak21}, so the main theorem does not
say
that quasiisometry to a particular tree RAAG implies commensurability
to that same tree RAAG.
Nevertheless, we find the commensurability aspect of the theorem surprising. 
Usually when a geometric relation can be promoted to an algebraic
relation it is because there is some strong rigidity
phenomenon at play, but the class of tree RAAGs is quite flexible. 
For example, the results of Huang and Kleiner \cite{MR3761106} on
groups quasiisometric to RAAGs with finite outer automorphism group do
not apply to tree RAAGs.
\nocite{CasDanEdl} 
\subsection*{Acknowledgements}
This paper is a spin-off of \cite{CasDanEdl}, where the question is when $W_\Gamma$ is quasiisometric to any RAAG whatsoever.

Alexandra Edletzberger had already noticed that Nguyen and Tran's decomposition was related to the JSJ decomposition, and that cycles of cuts should be obstructions to being quasiisometric to a RAAG \cite[Remark~1.38, Remark~4.29]{Edl24thesis}.

\section{Preliminaries}\label{sec:prelim}

\subsection{Terminology}
The \emph{right-angled Artin group
with presentation graph $\Delta$}, for a finite simplicial graph
$\Delta$, is the group $A_\Delta$ presented by a generator for each
vertex of $\Delta$ and a
commuting relation between two generators when they are connected by
an edge of $\Delta$.
The \emph{right-angled Coxeter group with presentation graph $\Gamma$},
$W_\Gamma$, is defined similarly, with additional relations saying that
each generator has order 2.
In both cases these groups have a geometric action on a CAT(0) cube
complex, which has dimension at most 2 when the presentation graph is triangle-free.

$\free_n$ is the free group of rank $n$, and
$\free$ is $\free_n$ for some unspecified $n\geq 2$ when the precise rank is
not important.
A \emph{basis} of $\free$ means a free generating set.

An \emph{induced} subgraph of a simplicial graph is a subgraph that
contains all possible edges between its vertices.
If $\Upsilon'$ is an induced subgraph of $\Upsilon$ then the inclusion
at the graph level extends to inclusions $A_{\Upsilon'}< A_\Upsilon$
and $W_{\Upsilon'}<W_\Upsilon$.
A subgroup of a RAAG or RACG defined in this way is called a \emph{special subgroup}.

If $\Gamma$ is a presentation graph of a RACG then a \emph{thick join}
means a join subgraph $A\join B$ of $\Gamma$ such that $W_A$ and $W_B$
are infinite groups, which happens when  $A$ and $B$ are incomplete
subgraphs.
A \emph{suspension} is a join in which $A$ is a 2--anticlique.
If $A$ is a 2--anticlique and $|B|>2$ then $A$ is called the
\emph{pole} or the \emph{suspension points} of $A\join B$ and $B$ is
called the \emph{suspended points}.
A vertex is \emph{essential} if it has valence at least 3.
A \emph{cone vertex} is everybody's neighbor.
If $v$ is a vertex, its \emph{link} $\lk(v)$ is its set of neighbors
and its \emph{star} is $\st(v):=\lk(v)\cup\{v\}$.
A graph is \emph{biconnected} if it is connected with no cut
vertex.
Thus, a single edge is biconnected. 

Two subsets of a metric space are \emph{$r$--coarsely equivalent} if
their Hausdorff distance is at most $r$, and \emph{coarsely
  equivalent} if their Hausdorff distance is finite.
Coarse equivalence gives an equivalence relation that is preserved by
quasiisometries.

\subsection{Visible RAAG subgroups}\label{danilevcovitz}
If $\Gamma$ is a graph, its complement graph $\Gamma^c$ is the graph
on the same vertex set that has an edge precisely when $\Gamma$ does
not. 
Suppose $\Lambda\subset\Gamma^c$ is a disjoint union of
trees $\Lambda_0$ and $\Lambda_1$.
For a set of vertices $V$ in $\Gamma$, if $V$ is contained in one of
the $\Lambda_i$ then $\hull_\Lambda(V)$ is the set of vertices of
$\Gamma$ contained in the convex hull of $V$ in $\Lambda_i$.
  The \emph{commuting graph} $\Delta$ of $\Lambda$ is the graph that
  has a vertex for each edge of $\Lambda$, with an edge between two of
  them if their support spans a square in $\Gamma$.
    There is a natural homomorphism $A_\Delta\to W_\Gamma$
    defined by 
    sending the generator of $A_\Delta$ corresponding to vertex $\{a,b\}$ of
    $\Delta$ to the element $ab\in W_\Gamma$.
it is easy to come up with examples where this homomorphism is not
injective, but Dani and Levcovitz \cite{DanLev24} give conditions that
imply the homomorphism is injective and the image has finite index.
    
\begin{theorem}[{\cite[Theorem~4.8, Corollary~3.33, Remark~4.3]{DanLev24}}]\label{thm:DL}
  Let $\Gamma$ be an incomplete, triangle-free graph without
  separating cliques.
  Suppose $\Lambda$ is a disjoint union of trees $\Lambda_0$ and
  $\Lambda_1$ in  $\Gamma^c$ such that $\Lambda$ spans $\Gamma$ and
  for each $i$, no two vertices of $\Lambda_i$ are adjacent in
  $\Gamma$.
  Let $\Delta$ be the commuting graph of $\Lambda$.
  
If  $\Lambda$ satisfies the following conditions, then the
homomorphism $A_\Delta\to W_\Gamma$ is injective and its image has
index 4 in $W_\Gamma$.
  \begin{enumerate}
  \item[$\mathcal{R}_3$:] If $\{a,b\}\join\{c,d\}$ is a square in
    $\Gamma$ then
    $\hull_\Lambda\{a,b\}\join\hull_\Lambda\{c,d\}\subset\Gamma$.
    \item[$\mathcal{R}_4$:] If $a\edge b$ is an edge in a cycle
      $\gamma\subset\Gamma$ then there is a square $\{a,a'\}\join\{b,b'\}$ with $a',b'\in\hull_\Lambda(\gamma)$.
    \end{enumerate}
\end{theorem}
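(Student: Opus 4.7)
For the index-4 assertion, the plan is a coloring argument. Define a homomorphism $\phi\from W_\Gamma\to(\mathbb{Z}/2)^2$ that sends every generator lying in $\Lambda_0$ to $(1,0)$ and every generator in $\Lambda_1$ to $(0,1)$; this is well-defined and surjective because $\Lambda$ spans $\Gamma$ and $\Lambda_0,\Lambda_1$ are disjoint. Every Artin generator $x_{\{a,b\}}$ maps under $A_\Delta\to W_\Gamma$ to a product of two Coxeter generators of the same color, so the image lands in $\ker\phi$, which already has index $4$. To see the image equals $\ker\phi$, take a word in the kernel, pair up consecutive letters of the same color, and use the tree structure of each $\Lambda_i$ to write the product of any two same-color vertices as a telescoping product of elements $ab$ corresponding to edges of $\Lambda_i$, i.e., as a word in the Artin generators.

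The harder step is injectivity. The strategy is to take a nontrivial reduced word $w$ in $A_\Delta$ that maps to the identity in $W_\Gamma$ and to argue to a contradiction via a minimal (van Kampen) disk diagram in the presentation $2$--complex of $W_\Gamma$ (or equivalently the Davis complex). Since $\Gamma$ is triangle-free, the interior $2$--cells of such a diagram are squares. Condition $\mathcal{R}_3$ is engineered for these squares: whenever a square $\{a,b\}\join\{c,d\}$ appears, $\mathcal{R}_3$ guarantees that the full join $\hull_\Lambda\{a,b\}\join\hull_\Lambda\{c,d\}$ sits inside $\Gamma$, so any Coxeter-level commutation across that square can be realized by commuting Artin generators indexed by edges of the two hulls. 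Condition $\mathcal{R}_4$ handles the boundary interactions: whenever an edge $a\edge b$ of $\Gamma$ occurs in a cycle of the diagram, $\mathcal{R}_4$ produces a square $\{a,a'\}\join\{b,b'\}$ with companions inside the hull of the cycle, which is exactly what is needed to lift Coxeter cancellations back to Artin-level ones.

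The main obstacle will be the combinatorial bookkeeping of the disk-diagram reduction: one must show that every Coxeter move (commutation across a square or cancellation of a repeated generator) available in the diagram lifts to an Artin move in $w$, without creating new unliftable configurations elsewhere in the diagram, and one must use the no-separating-cliques hypothesis to rule out pathological decompositions where the two conditions would not apply. The induction on diagram complexity is the technical core of the Dani--Levcovitz argument; the sketch above is the natural approach, but turning it into a proof requires carefully tracking hull membership and cycle boundaries through each reduction step.
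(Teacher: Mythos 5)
First, a point of reference: the paper does not prove this statement at all --- it is imported verbatim from Dani--Levcovitz \cite{DanLev24} (their Theorem~4.8, Corollary~3.33, Remark~4.3) and used as a black box. So there is no internal proof to compare against; your proposal has to stand or fall on its own, and as written it has two genuine gaps.

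For the index-4 claim, your reduction to ``the image equals $\ker\phi$'' is right, and the telescoping trick along the trees $\Lambda_0,\Lambda_1$ does show the image contains $s_as_{a'}$ for \emph{any} two same-colored vertices $a,a'$. But the step ``pair up consecutive letters of the same color'' fails for alternating-color words such as $s_as_bs_{a'}s_{b'}$ (with $a,a'$ in $\Lambda_0$ and $b,b'$ in $\Lambda_1$), which lie in $\ker\phi$ but have no two adjacent same-colored letters. Tracking cosets, one sees that all color-$0$ generators give one coset and all color-$1$ generators another, but to close the coset graph at four cosets you must show that elements like $s_a(s_bs_{b'})s_a$ remain in the image, i.e.\ that the image is stable under these conjugations; this is not formal and is exactly what the cited Corollary~3.33 establishes via the ``completion'' $\Omega(\Lambda)$ of $\Lambda$, a Stallings-graph analogue for RACGs. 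Without that machinery (or an equivalent rewriting argument), the surjectivity onto $\ker\phi$ is unproved.

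For injectivity, your paragraph describes what conditions $\mathcal{R}_3$ and $\mathcal{R}_4$ are ``engineered for'' and then defers the entire induction on diagram complexity, conceding that carrying it out is ``the technical core.'' That concession is accurate: what you have written is a restatement of the theorem's design, not an argument, and the crucial step --- that every Coxeter-level reduction in a minimal diagram lifts to an Artin-level reduction of the word in $A_\Delta$ without creating new unliftable configurations --- is precisely where a proof could fail and where all the work lives. Note also that Dani--Levcovitz do not argue via van Kampen diagrams: they read off a presentation of the subgroup $G_\Lambda$ from the completion $\Omega(\Lambda)$ and verify that the only relations are the commutations defining $A_\Delta$. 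So even the skeleton you propose is a different (and unverified) route from the one in the cited source. For the purposes of this paper the correct move is simply to cite \cite{DanLev24}; a self-contained proof would require reproducing their completion machinery or genuinely carrying out the diagram induction you sketch.
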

Call a graph $\Lambda$ satisfying the hypotheses of \fullref{thm:DL}  a
\emph{finite index Dani-Levcovitz $\Lambda$} (FIDL--$\Lambda$).
When we have a FIDL--$\Lambda$, the commuting graph $\Delta$ gives a
`visible' finite index RAAG subgroup of $W_\Gamma$.
It is `visible' in the sense that vertices of $\Delta$ correspond to
diagonals of squares of $\Gamma$, and vertices of $\Delta$ span an
edge precisely when their support spans a square in $\Gamma$.

\subsection{CFS graphs}
For $W_\Gamma$ to be quasiisometric to a
RAAG it must have at most quadratic divergence, which is
characterized by $\Gamma$ having property \emph{CFS} \cite{DanTho15}.
\begin{definition}[{cf.\cite[Definition~2.3]{CasEdl}}] 
  The \emph{diagonal graph} $\diag(\Gamma)$ of $\Gamma$ is the graph
  with an edge for each induced square of $\Gamma$, with vertices
  representing diagonal pairs.
  The \emph{support} of a vertex of $\diag(\Gamma)$ is the two
  vertices of $\Gamma$ in the corresponding diagonal pair. The support
  of a subset of $\diag(\Gamma)$
  is the union of supports of its vertices. 
$\Gamma$ is \emph{CFS} if $\diag(\Gamma)$ contains a component whose
support is all non-cone vertices of $\Gamma$.
\end{definition}

We will not invoke the CFS property directly, but $\diag(\Gamma)$ will
figure prominently.
Consider that $A_\Delta$ is 1--ended when $\Delta$ is connected and
not a single vertex, and $W_\Gamma$ is 1--ended when $\Gamma$ is incomplete
without separating cliques.
Thus, in the 1--ended case,  for a FIDL--$\Lambda$ as in the
previous section, we may identify $\Delta$ with a subgraph of
$\diag(\Gamma)$, since $\Delta$ is connected and an edge in $\Delta$ corresponds to a square in $\Gamma$.

\subsection{JSJ decompositions}\label{prelim:jsj}
We assume familiarity with Bass-Serre theory.
In this paper we are interested in JSJ decompositions of finitely
presented groups over 2--ended subgroups (equivalently, over virtually
$\mathbb{Z}$ subgroups), which are 
graph of groups decompositions that encode all possible splittings of
the group over 2--ended subgroups.
The existence of non-trivial JSJ decompositions, for finitely
presented groups not commensurable to a surface group, is a quasiisometry
invariant \cite{Pap05}, but the precise structure of a particular JSJ
decomposition is not.
One can make a canonical object by passing to the Bass-Serre tree of
any JSJ decomposition and
collapsing cylinders to make the \emph{JSJ tree of cylinders}
\cite{MR3758992}. 
It is a bipartite tree where one part consists of \emph{cylinder}
vertices, which correspond to commensurability classes of universally
elliptic 2--ended splitting subgroups, and one part is made up of
\emph{rigid} and \emph{hanging} vertices that belong to more than one
cylinder.
Hanging vertices contain the non-(universally elliptic) 2--ended
splitting subgroups; that is, they contain collections of 2--ended
splitting subgroups that give incompatible group splittings.
Rigid vertices correspond to subgroups that are not split by any
2--ended splitting of the ambient group.

It follows from Papasoglu's work \cite{Pap05} (cf.\ \cite{CasMar17})
that a quasiisometry $\phi\from G\to G'$ between
two finitely presented groups induces an isomorphism of their JSJ trees of
cylinders, $\phi_*\from T\to T'$, that preserves vertex type:
cylinder/rigid/hanging. 
Furthermore, if $X$ is a tree of spaces for $G$ over $T$, and
similarly $X'$ for $G'$ over $T'$, then restriction of $\phi$ to each vertex space
$X_v$ is uniformly close to a quasiisometry $\phi_v\from
X_v\to X'_{\phi_*(v)}$ that induces a bijection of incident edge
spaces by taking $X_e\subset X_v$ to a set uniformly coarsely equivalent to
$X'_{\phi_*(e)}\subset X'_{\phi_*(v)}$.

The quotient of the action of a group on its JSJ tree of cylinders
gives the \emph{graph of cylinders}.
Since the JSJ tree of cylinders can be recovered by development, this
graph of groups contains information on quasiisometry invariants for
the group, such as the vertex types that occur, the quasiisometry
classes the vertex groups, and the relative quasiisometry types of the
vertices relative to the peripheral pattern of their incident edge groups, which will be
discussed in \fullref{sec:basicpatterns}.

Work of  Clay \cite{Cla14} and Margolis \cite{Mar20}, says that
the graph of cylinders of a RAAG can be described `visually' in
terms of the presentation graph $\Delta$ as follows, where in each
case the local group is the special subgroup defined by the relevant subgraph:
\begin{itemize}
  \item Cylinders are stars of cut vertices of $\Delta$.
\item Rigid vertices are maximal biconnected subgraphs of $\Delta$
  that are not contained in a single cylinder.
\item Hanging vertices do not occur.
  \item There is an edge between a cylinder and rigid vertex when they intersect.
  \end{itemize}

Edletzberger \cite{Edl24}, building on work of Mihalik and Tschantz
\cite{MihTsc09} and Dani and Thomas \cite{DanTho17}, gave a visual description of the graph
of cylinders of a 1--ended RACG with a triangle-free presentation
graph $\Gamma$ that is not a cycle graph.
The 2--ended subgroups of interest manifest as special subgroups
generated by \emph{cuts}
     $\{a-b\}$, meaning $a$ and $b$ are not adjacent and either:
     \begin{itemize}
     \item $\{a,b\}$ is a cut pair, that is, $\Gamma\setminus\{a,b\}$
       is not connected.
       \item $\{a,b\}$ is not a cut pair, but there is a vertex
         $c\in\lk(a)\cap\lk(b)$ such that the 2--path $a\edge c\edge
         b$ disconnects $\Gamma$. In this case $a\edge c\edge b$ is a
         \emph{cut 2--path}.
                \end{itemize}
The distinction between 2--ended splitting
subgroups that are non-(universally elliptic) vs universally elliptic
is reflected topologically: A cut gives a non-(universally elliptic)
splitting if there is some other cut that \emph{crosses} it, in the
sense that each cut contains vertices in different components of the
complement of the other. A cut gives a universally elliptic splitting
if it is \emph{uncrossed}, ie, not crossed by any cut. 
\begin{itemize}
   \item Cylinders are suspensions
     $\{a,b\}\join(\lk(a)\cap\lk(b))$ for uncrossed
     cut $\{a-b\}$. 
         \item Rigid vertices are sets of essential vertices of
    $\Gamma$ of size at least four that cannot be separated by a cut and that are maximal with
    respect to inclusion among such sets.
\item Hanging vertices consist of maximal collections of pairwise crossing
  cut pairs or pairwise crossing cut 
  2--paths. (Cut pairs cannot cross cut 2--paths, and vice versa
 \cite[after Definition~2.8]{CasEdl}.)
 \item There is an edge between a cylinder  $\{a,b\}\join(\lk(a)\cap\lk(b))$ and a rigid/hanging
   vertex if the latter contains a cut $\{a-b\}$. In this case the edge
   corresponds to intersection of subgraphs. 
\end{itemize}

\subsection{Peripheral patterns}\label{sec:basicpatterns}
A \emph{peripheral pattern} $\mathcal{P}(G,\mathcal{H})$ in a group $G$ relative to a collection
$\mathcal{H}:=\{H_0,\dots,H_{n-1}\}$ of subgroups of $G$ is the set of coarse
equivalence classes of left cosets of the $H_i$ in $G$.
When $\mathcal{H}$ consists of 2--ended subgroups the peripheral
pattern it generates may be called a \emph{line
  pattern}.
A map between two groups with peripheral patterns is 
\emph{pattern preserving} if it takes each coset of one pattern to within
uniformly bounded Hausdorff distance of one in the other, and vice
versa, inducing a bijection between patterns.
The \emph{relative quasiisometry type} of $\mathcal{P}(G,\mathcal{H})$
is the equivalence class of spaces with peripheral patterns up to
pattern preserving quasiisometry.

Line patterns in free groups have been considered before 
\cite{Ota92,CasMac11,Cas16}.
Associated to a line pattern there is a \emph{decomposition space},
which is the quotient of the boundary of the free group obtained by
identifying the two endpoints of each line in the pattern.
Pattern preserving quasiisometries induce homeomorphisms of
decomposition spaces.
Two cases are of particular interest for us.
It turns out that each of these types defines a single relative
quasiisometry type of line pattern.

\begin{definition}
  A \emph{surface type (line) pattern} in $\free$ is a peripheral pattern 
  $\mathcal{P}(\free,\mathcal{H})$ with $\mathcal{H}=\{\langle f_0\rangle,\dots,\langle
f_{n-1}\rangle\}$ such that, up to passing to roots, conjugates, and
inverses, and removing duplicate entries, there is a compact surface
with fundamental group $\free$ such that $\{f_0,\dots,f_{n-1}\}$ are
the elements of $\free$ represented by the boundary curves.
Equivalently, its decomposition space is a circle. 
\end{definition}

\begin{definition}
  A \emph{basic (line) pattern} in $\free$ is a peripheral pattern 
  $\mathcal{P}(\free,\mathcal{H})$ with $\mathcal{H}=\{\langle f_0\rangle,\dots,\langle
f_{n-1}\rangle\}$ such that, up to passing to roots, conjugates, and
inverses, and removing duplicate entries, $\{f_0,\dots,f_{n-1}\}$ is a subset of a basis
of $\free$.
Equivalently, its decomposition space is totally disconnected. 
\end{definition}

In the present work we are interest not in line patterns in free
groups, but in \emph{plane patterns} in $\free\times\mathbb{Z}$.
Let $z$ be a generator of the center.
By a \emph{plane pattern} in $\free\times\mathbb{Z}$, we mean a peripheral
pattern $\mathcal{P}(\free\times\mathbb{Z},\mathcal{H})$ where
$\mathcal{H}$ consists of $\mathbb{Z}^2$ subgroups, which we might as well assume, up to
coarse equivalence,  are maximal $\mathbb{Z}^2$
subgroups.
A maximal $\mathbb{Z}^2$ subgroup of $\free\times\mathbb{Z}$ is of the
form $\langle f\rangle\times\langle z\rangle$, where $f\in \free$ and
$z\in\mathbb{Z}$ are
nontrivial elements that are not proper powers.
Using the results and terminology of \cite{Pap05}, the only separating quasilines of
$\free\times\mathbb{Z}$ are coarsely equivalent to the central
$\mathbb{Z}$.
Thus, a quasiisometry $\phi\from \free\times\mathbb{Z}\to
\free'\times\mathbb{Z}$ takes each coset of the central $\mathbb{Z}$ to
within bounded Hausdorff distance, depending only on the quasiisometry
constants, of a coset of the central $\mathbb{Z}$.
From this it follows that $\pi_{\free'}\circ \phi|_{\free\times\{1\}}\from \free\to
\free'$, where $\pi_{\free'}\from \free'\times\mathbb{Z}$ is projection to the
first factor given by killing the center, is a quasiisometry.
Conversely, $(\pi_{\free'}\circ \phi|_{\free\times\{1\}})\times
\mathrm{Id}_\mathbb{Z}\from \free\times\mathbb{Z}\to
\free'\times\mathbb{Z}$ is a quasiisometry (but not necessarily one
that is close
to $\phi$).
The projections $\pi_\free\from \free\times\mathbb{Z}\to \free$ and $\pi_{\free'}\from
\free'\times\mathbb{Z}\to \free'$ send peripheral plane patterns to peripheral
line patterns, and 
the original quasiisometry $\phi$ preserves plane patterns if and only if 
$\pi_{\free'}\circ \phi|_{\free\times\{1\}}$ preserves the projected line patterns.
We extend the basic/surface terminology to the
plane patterns according to the type of the projection.
More generally, extend the terminology to patterns of 
virtually $\mathbb{Z}^2$ subgroups
in  virtually $\free\times\mathbb{Z}$ groups, according to the type of the
pattern obtained by pushing forward the pattern by the map induced by
the restriction to a finite index $\free\times\mathbb{Z}$
subgroup.

\begin{lemma}\label{raags_are_basic}
  Let $\Delta$ be a finite tree of diameter at least three, so that 
$A_\Delta$ is a 1--ended RAAG with a nontrivial graph of cylinders.
For every cylinder in the graph of cylinders, the peripheral pattern coming from the
incident edge groups is basic.
\end{lemma}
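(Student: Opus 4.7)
The plan is to read off the cylinder group and its incident edge groups directly from the Clay--Margolis visual description of the graph of cylinders recalled in \fullref{prelim:jsj}, and then verify that modding out by the center of the cylinder produces a basic line pattern.

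Fix a cylinder of $A_\Delta$; it corresponds to a cut vertex $v$ of the tree $\Delta$, with vertex group the special subgroup $A_{\st(v)}$. Since $\Delta$ is a tree, $\lk(v)$ is an anticlique, so
\[
A_{\st(v)} \;=\; \langle v\rangle\times A_{\lk(v)} \;=\; \langle v\rangle\times \free(\lk(v)),
\]
with $\langle v\rangle$ the center and $\lk(v)$ a natural free basis of the second factor. Any cut vertex of a tree has valence at least $2$, so $\free(\lk(v))$ has rank at least $2$, and $A_{\st(v)}$ is of the form $\free\times\mathbb{Z}$ to which the basic/surface classification of plane patterns applies.

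The rigid vertices adjacent to $\st(v)$ in the graph of cylinders are the maximal biconnected subgraphs of $\Delta$ that share an edge with $\st(v)$ and are not themselves contained in a single cylinder; in a tree these are precisely the edges $\{v,w\}$ of $\Delta$ through $v$ with $w\in\lk(v)$ also a cut vertex. The incident edge group at such an adjacency is the special subgroup of the intersection, $A_{\{v,w\}}=\langle v\rangle\times\langle w\rangle\cong\mathbb{Z}^2$, sitting inside $A_{\st(v)}$ as a maximal $\mathbb{Z}^2$ subgroup.

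Finally, project $A_{\st(v)}$ onto $\free(\lk(v))$ by killing the center. Each incident edge group $\langle v\rangle\times\langle w\rangle$ pushes forward to the cyclic subgroup $\langle w\rangle$, generated by an element of the free basis $\lk(v)$. Without any need to pass to roots, conjugates, or inverses, or to remove duplicates, the generators of the projected line pattern are literally a subset of a basis of $\free(\lk(v))$, so by definition the line pattern is basic. By the extension of that terminology to plane patterns in $\free\times\mathbb{Z}$ given at the end of \fullref{sec:basicpatterns}, the peripheral plane pattern at the cylinder is basic. I do not expect a substantive obstacle: once the Clay--Margolis identification of cylinders, their adjacent rigid vertices, and the associated edge groups is in hand, the lemma is a direct verification, making crucial use of the fact that $\Delta$ being a tree (rather than merely triangle-free) forces $\lk(v)$ to generate the factor $A_{\lk(v)}$ freely.
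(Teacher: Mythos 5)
Your proposal is correct and follows essentially the same route as the paper's proof: identify the cylinder as the star of a cut (non-leaf) vertex $v$, so its group is $\langle v\rangle\times\free(\lk(v))$, observe that the incident rigid vertices are the non-leaf edges $\{v,w\}$ with edge groups $\langle v\rangle\times\langle w\rangle$, and note that killing the center sends these to the subset $\{w\}$ of the standard basis $\lk(v)$, which is a basic line pattern by definition. The only cosmetic difference is that the paper also records explicitly that the diameter hypothesis forces at least one non-leaf neighbor, which you do not need for the conclusion.
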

\begin{proof}
  Since $\Delta$ is a tree, every non-leaf is a cut vertex, so 
  cylinders are stars of non-leaf vertices $v$ of $\Delta$.
If $v$ is a non-leaf then $\lk(v)=\{w_0,\dots,w_{n-1},\ell_0,\dots,\ell_{m-1}\}$, where
the $w_i$ are non-leaves and the $\ell_j$ are leaves.
Since the diameter of $\Delta$ is greater
than 2,  $n>0$ and
$m+n\geq 2$.
The cylinder group is $A_{\st(v)}=\langle v\rangle\times\langle
w_0,\dots,w_{n-1},\ell_0,\dots,\ell_{m-1}\rangle\cong \mathbb{Z}\times
\free_{m+n}$.
Since $\Delta$ is a tree, the maximal biconnected subgraphs are single
edges, and those that are not contained in a single cylinder are the
non-leaf edges. 
Thus, the neighbors of the cylinder $\st(v)$ in
the graph of cylinders are the rigid vertices $v\edge w_i$ for $0\leq
i<n$, with $A_{\{v,w_i\}}\cong\mathbb{Z}^2$.
These give a basic plane pattern, since $\{w_0,\dots,w_{n-1}\}$ is a subset
of the basis $\{w_0,\dots,w_{n-1},\ell_0,\dots,\ell_{m-1}\}$ of  the $\free_{m+n}$
factor of $A_{\st(v)}$.
\end{proof}

\begin{example}\label{ex:nonbasicjsj}
  Let $\Gamma$ be the nonplanar graph in \fullref{fig:nonbasic}.
  Let $W_\Gamma$ be the RACG whose presentation graph is $\Gamma$, so
  that the generators of $W_\Gamma$ are $s_i$ where $s_i$ corresponds
  to vertex $i$ of $\Gamma$.
  (For compactness of notation, we omit the `$s$' in figures and subscripts.)
The graph of cylinders of $W_\Gamma$ is shown in
\fullref{fig:jsjgoc_nonbasic}, where the labels are the
cylinder groups and each unlabelled vertex is a rigid vertex whose group is
the intersection of its neighboring cylinder groups, which in each
case is isomorphic to the product of two infinite dihedral groups.
All edge groups are isomorphic to their neighboring rigid vertex
group, and all edge maps are inclusion.
\begin{figure}[h]
  \begin{subfigure}{.35\textwidth}
      \centering
      \tdplotsetmaincoords{100}{50} 
  \begin{tikzpicture}[tdplot_main_coords]\tiny
  \pgfmathsetmacro{\phi}{sqrt(3)/2}
  \coordinate[label={[label distance=0pt] 0:$0$}]  (0) at (1,0,0);
  \coordinate[label={[label distance=0pt] -45:$1$}]  (1) at (-.5,\phi,0);
  \coordinate[label={[label distance=0pt] 180:$2$}]  (2) at (-.5,-\phi,0);
  \coordinate[label={[label distance=0pt] 0:$3$}]  (3) at (0,0,1);
  \coordinate[label={[label distance=0pt] 0:$4$}]  (4) at (0,0,-1);
  \coordinate[label={[label distance=0pt] -45:$5$}]  (5) at ($(0)!0.5!(1)$);
  \coordinate[label={[label distance=0pt] -90:$6$}]  (6) at ($(1)!0.5!(2)$);
  \coordinate[label={[label distance=0pt] 90:$7$}]  (7) at
  ($(2)!0.5!(0)$);
  \coordinate[label={[label distance=-1pt] 0:$8$}]  (8) at (0,0,0);
    \draw (1)--(6)--(2)--(7)--(0);
  \draw (3)--(0)--(4) (3)--(1)--(4) (3)--(2)--(4);
  \draw (0)--(1);
     \filldraw (0) circle (1pt) (1) circle (1pt) (2) circle (1pt) (3)
  circle (1pt) (4) circle (1pt) (5) circle (1pt) (6) circle (1pt) (7)
  circle (1pt) (0,0,0) circle (1pt);
  \draw (3)--(8)--(4);
\end{tikzpicture}
  \subcaption{$\Gamma$}
  \label{fig:nonbasic}
\end{subfigure}
\hfill
  \begin{subfigure}{.6\textwidth}
\centering
  \begin{tikzpicture}\tiny
 \coordinate[label={[label distance=0pt] 180:$W_{\{3,4\}}\times
   W_{\{0,1,2,8\}}$}]  (0) at (0,0);
 \coordinate[label={[label distance=0pt] 0:$W_{\{0,1\}}\times W_{\{3,4,5\}}$}]  (1) at (0:1);
\coordinate[label={[label distance=0pt] 0:$W_{\{1,2\}}\times W_{\{3,4,6\}}$}]  (2) at (-120:1);
  \coordinate[label={[label distance=0pt] 0:$W_{\{0,2\}}\times
    W_{\{3,4,7\}}$}]  (3) at (120:1);
  \draw (0)--(1) (0)--(2) (0)--(3);
  \filldraw (0) circle (1pt) (1) circle (1pt) (2) circle (1pt) (3)
  circle (1pt) ($(0)!.5!(1)$) circle (1pt) ($(0)!.5!(2)$) circle (1pt) ($(0)!.5!(3)$) circle (1pt) ;
\end{tikzpicture}
  \subcaption{Graph of cylinders of $W_\Gamma$.}
  \label{fig:jsjgoc_nonbasic}
\end{subfigure}
\caption{Graph $\Gamma$ and graph of cylinders of $W_\Gamma$ in \fullref{ex:nonbasicjsj}.}
\end{figure}

The middle cylinder vertex $\mathcal{C}:=W_{\{3,4\}}\times W_{\{0,1,2,8\}}$ has
peripheral pattern generated by $\mathcal{H}:=\{W_{\{0,1\}}\times
W_{\{3,4\}},\, W_{\{1,2\}}\times W_{\{3,4\}},\, W_{\{0,2\}}\times
W_{\{3,4\}}\}$.
The group $\mathcal{C}$ has an index 4 subgroup isomorphic to 
$\free_3\times\mathbb{Z}$, namely $\langle w,x,y\rangle\times\langle
z\rangle$, where $w:=s_0s_8$, $x:=s_0s_1$, $y:=s_1s_2$, and $z:=s_3s_4$.
Restriction to the finite index subgroup induces a pattern preserving
quasiisometry between $\mathcal{P}(\mathcal{C},\mathcal{H})$  and $\free_3\times \mathbb{Z}$ with plane
pattern generated by $\{\langle x\rangle\times\langle z\rangle,\,\langle
y\rangle\times\langle z\rangle,\,\langle xy\rangle\times\langle
z\rangle\}$.
Projection to the $\free_3$ factor gives line pattern generated by
$\{x,y,xy\}$.
Since the generators live in a free factor $\langle x,y\rangle$ of
$\free_3$, it follows that the decomposition space for $\mathcal{P}(\langle
x,y\rangle,\{x,y,xy\})$ embeds into the decomposition space for $\mathcal{P}(\free_3,\{x,y,xy\})$.
The former is a surface type pattern:
it matches the standard description of the fundamental group and
boundary curves of a 3--holed sphere.
Therefore, the decomposition space for $\mathcal{P}(\free_3,\{x,y,xy\})$ contains
circles; in particular, it is not totally disconnected, so the line
pattern is not basic. 
Thus, $W_\Gamma$ is not quasiisometric to a tree RAAG, since the non-basic
plane pattern in the cylinder  $W_{\{0,1,2,8\}}\times W_{\{3,4\}}$ of
the RACG  cannot match the plane patterns in cylinder vertices
of tree RAAGs, which by \fullref{raags_are_basic} are all
basic. 
\end{example}

In the planar case, according to Nguyen and Tran's argument, $\Gamma$
decomposes into a tree of maximal suspensions, with neighbors
determined by two suspensions sharing a square.
Playing the game as in the example by restricting to a finite index
torsion-free subgroup and then projecting to the free factor, one can see that there are
only two possibilities for a maximal suspension 
$\{a,b\}\join\{c_0,\dots,c_{n-1}\}$:
either there are fewer than $n$ neighbors and the line pattern is
basic, or there are exactly $n$ neighbors and the line pattern is
surface type.
Nguyen and Tran use `suspension of $n$ vertices has exactly $n$
neighboring suspensions' as an obstruction to being quasiisometric to
a tree RAAG. 
\fullref{ex:nonbasicjsj} illustrates that in the nonplanar case there
are more varied ways for a pattern to fail to be
basic; $\mathcal{C}$ has a non-basic pattern, but it
is a suspension of 4 vertices with only 3 neighboring
suspensions.

\section{RACGs quasiisometric to tree RAAGs}\label{sec:raagtrees}

\begin{theorem}\label{maintheorem}
  Let $\Gamma$ be an incomplete, triangle-free graph without
  separating cliques. The following are equivalent:
  \begin{enumerate}
              \item $W_\Gamma$ has an index four visible tree RAAG subgroup.\label{item:commraagtree}
      \item $W_\Gamma$ is in the quasiisometry class of the tree RAAGs.\label{item:qiraagtree}
    \item $W_\Gamma$ has a nontrivial graph of cylinders with no
      hanging vertices and:
      \begin{enumerate}
      \item Cylinders are virtually $\free\times\mathbb{Z}$.\label{item:cyclinderfreez}
        \item Rigid vertices are virtually $\mathbb{Z}^2$. Each is
          adjacent to two cylinders, with edge group equal to the
          rigid vertex group. \label{item:rigidzsquared}
        \item In each
          cylinder the incident edge groups form a basic plane pattern.\label{item:basicplanepattern}
      \end{enumerate}\label{item:jsjgoc}
    \item $\Gamma$ has the following structure:
      \begin{enumerate}
      \item Maximal thick joins are non-square
        suspensions.
        Every vertex and edge is contained in a maximal
        suspension.
        The pole of each maximal suspension gives a cut, in the sense
        that it is either a cut pair or
        forms a cut 2--path with one of the suspended vertices.
        All cuts come from the pole of a maximal suspension in one of
        these two ways.\label{item:cylinders}
      \item Every set of at least four essential vertices that is not
        separated by a cut is a square, each of
        whose diagonals is the pole of a
        maximal suspension. \label{item:rigid}
        \item There does not exist a maximal suspension
          $\{a,b\}\join\{c_0,\dots,c_{n-1}\}$ and $3\leq m\leq n$ such
          that for all $0\leq i<m$ the pair $\{c_i,
          c_{(i+1)\mod m}\}$ is the pole of a
          maximal suspension. \label{item:nosuspensioncycle}
        \end{enumerate}\label{item:raagtreegraphconditions}
  \end{enumerate}
\end{theorem}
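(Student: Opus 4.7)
The plan is to prove the cycle of implications $(1) \Rightarrow (2) \Rightarrow (3) \Rightarrow (4) \Rightarrow (1)$. The step $(1) \Rightarrow (2)$ is immediate: a finite-index subgroup is quasiisometric to the ambient group, and Behrstock--Neumann show that all tree RAAGs form a single quasiisometry class.

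For $(2) \Rightarrow (3)$ I would invoke Papasoglu's quasiisometry invariance of the JSJ tree of cylinders with its vertex typing, together with the accompanying quasiisometries of vertex spaces that preserve the peripheral patterns of incident edge spaces. Applying the Clay--Margolis visual description to a tree RAAG $A_T$ one reads off: cylinders $A_{\st(v)} \cong \mathbb{Z} \times \free$ for each non-leaf $v$, rigid vertices $A_{\{v,w\}} \cong \mathbb{Z}^2$ for each non-leaf edge, no hanging vertices, and each rigid vertex adjacent to exactly the two cylinders at its endpoints with edge group equal to the rigid group; combined with \fullref{raags_are_basic} this is the content of (3), and all of these features transfer across the induced isomorphism of JSJ trees of cylinders to $W_\Gamma$.

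For $(3) \Rightarrow (4)$ I translate each JSJ-level condition into a graph-theoretic one via Edletzberger's visual description of the graph of cylinders of $W_\Gamma$. Cylinder groups are suspensions $\{a,b\} \join (\lk(a) \cap \lk(b))$ for uncrossed cuts $\{a-b\}$, so ``virtually $\free \times \mathbb{Z}$'' forces $|\lk(a) \cap \lk(b)| \geq 3$, i.e.\ non-square suspensions, and the bijection between uncrossed cuts and cylinders yields (4)(a). A rigid vertex being $\mathbb{Z}^2$ with both incident edge groups equal to it means it is a square whose two diagonals are the poles of the two adjacent cylinders, giving (4)(b). Absence of hanging vertices rules out crossing cuts and cut 2--paths, and a ``suspension cycle'' as in (4)(c) would, by the mechanism illustrated in \fullref{ex:nonbasicjsj}, inject a circle into the decomposition space of the ambient cylinder's line pattern, contradicting the basic plane pattern condition (3)(c).

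For $(4) \Rightarrow (1)$, the substantive step, I build a FIDL--$\Lambda$ and apply \fullref{thm:DL}. By (4)(b) the bipartite graph whose two sides are ``maximal suspensions'' and ``squares'' (each square adjacent to the two cylinders whose poles are its diagonals) agrees with the graph of cylinders, and condition (4)(c) makes it a tree; I 2-color it, inducing a canonical bipartition of $V(\Gamma)$ that is consistent across overlapping suspensions. Each color class is independent in $\Gamma$ because within any maximal suspension the pole and the suspended set are each independent. I form $\Lambda_i$ by, for each maximal suspension, placing the pole edge in one $\Lambda_i$ and a spanning tree of the suspended vertices in the other, chosen so that every shared diagonal coming from a neighboring cylinder's pole is included (forced by (4)(b)); acyclicity of each $\Lambda_i$ then follows from (4)(c). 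Verifying $\mathcal{R}_3$ uses that each square of $\Gamma$ lies either inside one cylinder or between two adjacent ones; verifying $\mathcal{R}_4$ requires tracing cycles in $\Gamma$ through maximal suspensions and invoking (4)(a) and (4)(c). The commuting graph $\Delta$ then has one vertex per pole (arranged in a tree mirroring the graph of cylinders) plus additional leaves from the non-pole spanning tree edges, so $A_\Delta$ is a tree RAAG of diameter at least three, producing the desired index-4 visible subgroup. The principal obstacle lies in this last step: constructing the 2-coloring and spanning trees compatibly, and checking $\mathcal{R}_3$ and $\mathcal{R}_4$ using the global tree structure enforced by (4)(c).
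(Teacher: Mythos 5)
Your cycle of implications and the tools you invoke match the paper's proof exactly: (1)$\Rightarrow$(2) via finite index, (2)$\Rightarrow$(3) via Papasoglu's invariance of the JSJ tree of cylinders together with \fullref{raags_are_basic}, (3)$\Rightarrow$(4) via Edletzberger's visual description, and (4)$\Rightarrow$(1) by building a FIDL--$\Lambda$ from the tree of maximal suspensions and applying \fullref{thm:DL}. The first three implications are essentially complete as you sketch them; the paper adds only two small points you omit, namely quasiisometric rigidity of $\free\times\mathbb{Z}$ and $\mathbb{Z}^2$ to upgrade ``quasiisometric to'' to ``virtually'', and an argument ruling out a rigid vertex attached to a single cylinder by an index-2 edge group (using that in a triangle-free $\Gamma$ one square's special subgroup cannot be properly contained in another's).

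The gap is in (4)$\Rightarrow$(1), concentrated in a step you pass over too quickly. Condition (4)(c) only tells you that the \emph{mandatory} edges inside a single piece $\Lambda_{a,b}$ (poles of neighboring suspensions drawn on the suspended set of $\{a,b\}$) form a forest; it does not give acyclicity or connectivity of the union $\Lambda_i=\bigcup\Lambda_{a,b}$ over all suspensions of one color, nor does it show that your 2-coloring of the cylinder tree descends to a well-defined bipartition of $V(\Gamma)$ (a vertex can lie in many suspensions, and you must know all the corresponding tree vertices have the same color). All of these, and the entire verification of $\mathcal{R}_4$, rest on a combinatorial lemma your plan never formulates: if $\{p_0,q_0\},\dots,\{p_n,q_n\}$ is a geodesic in the tree of poles and some vertex $z$ is adjacent to all of $p_0,q_0,p_n,q_n$, then $n$ is even, $z$ lies in every odd-indexed pole, and each interior pole yields a cut separating the two ends --- this is the paper's \fullref{claim:suspendedsuspended}, proved by induction on $n$ using (4)(a) and (4)(b). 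Without it you cannot show that two same-colored pieces meet in exactly the one mandatory edge forced by the separating pole between them (so a cycle in $\Lambda_i$ assembled from discretionary edges of different pieces is not excluded), and you cannot produce the cut 2--paths through which an induced cycle of $\Gamma$ witnessing a failure of $\mathcal{R}_4$ would have to pass twice, which is how that condition is actually verified. Identifying and proving that lemma is the real content of the implication.
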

\begin{proof}
  Inclusion of a finite index subgroup induces a quasiisometry, so 
  \eqref{item:commraagtree}$\implies$\eqref{item:qiraagtree}.

  The graph of cylinders of a tree RAAG was described in the proof
  of \fullref{raags_are_basic}.
  Quasiisometry invariance of the JSJ tree of cylinders implies that
  any group quasiisometric to a tree RAAG has JSJ tree of cylinders
  whose cylinders are quasiisometric to $\free\times\mathbb{Z}$, whose
  rigid vertices are quasiisometric to $\mathbb{Z}^2$, has no hanging
  vertices, has exactly two cylinders adjacent to each rigid vertex,
  and such that in each cylinder the plane pattern coming from
  incident edges is a basic plane pattern.
  `Quasiisometric to' $\free\times \mathbb{Z}$ or $\mathbb{Z}^2$ can be
promoted to `virtually' $\free\times \mathbb{Z}$ or $\mathbb{Z}^2$,
respectively, since these groups are quasiisometrically rigid. 

Finally, if $\tilde{r}$ is a valence 2 rigid vertex of the JSJ tree of
cylinders and $r$ is its image in the graph of cylinders then $r$
either has two adjacent cylinders connected to $r$ via edge groups
equal to the vertex group of $r$, or it has one adjacent cylinder
connected to $r$ via an edge group that is an index 2 subgroup
of the vertex group of $r$.
The latter is impossible, by visibility of
the graph of cylinders in RACGs, since when $\Gamma$ is
triangle-free the only virtually $\mathbb{Z}^2$ special subgroups are
defined by squares of $\Gamma$, so one cannot be properly contained in another. 
Thus, \eqref{item:qiraagtree}$\implies$\eqref{item:jsjgoc}.

Assume \eqref{item:jsjgoc}.
Cylinders are suspensions
$\{a,b\}\join(\lk(a)\cap\lk(b))$ where there is an uncrossed cut $\{a-b\}$. 
By \eqref{item:cyclinderfreez}, the suspension
is non-square. 
Rigid vertices are maximal subsets of $\Gamma$ of at least four essential vertices that cannot be
separated by a cut.
By \eqref{item:rigidzsquared} 
they are virtually $\mathbb{Z}^2$, so they correspond
to squares in $\Gamma$ that are not separated by a cut.

A $K_{3,3}$ subgraph of $\Gamma$ cannot be separated by a cut.
A suspension
$\{a,b\}\join C$ with $|C|\geq 3$ can only be separated by a cut
of the form $\{a-b\}$, ie $\{a,b\}$ or $a\edge c\edge b$ for $c\in C$.
Thus the existence of either a maximal thick join
that is not a suspension or a non-square suspension whose suspension
points do not make a cut in one of these two ways implies there is a
non-virtually--$\mathbb{Z}^2$ rigid vertex, contradicting
\eqref{item:rigidzsquared}.
Thus, maximal thick joins are non-square suspensions whose pole gives a cut.
Since there are no hanging vertices, all cuts arise this way. 
Since the graph of cylinders gives a visual decomposition, every
vertex and edge of $\Gamma$ is contained in a subgraph corresponding
to a vertex of the graph of cylinders.
Since there are no hanging vertices and rigid vertices are assumed, by \eqref{item:rigidzsquared}, to
be contained in cylinders, every vertex and edge is contained in a
cylinder.
This establishes \eqref{item:cylinders}.

We have said rigid vertices correspond to certain squares and
cylinders correspond to maximal suspensions. A square is contained in a maximal suspension if and only if one of
its diagonals is the pole of the suspension.
Since a rigid vertex is assumed to be adjacent to two cylinders, both
diagonals of the corresponding square must be poles of maximal
suspensions.
This establishes \eqref{item:rigid}.

Item \eqref{item:nosuspensioncycle} describes a cylinder vertex of the graph of cylinders whose incident edges form a non-basic plane
pattern, contrary to \eqref{item:basicplanepattern}.

It remains to show \eqref{item:raagtreegraphconditions}
$\implies$\eqref{item:commraagtree}.
A graph $\Delta$ for which $A_\Delta$ is a finite index tree RAAG subgroup
of $W_\Gamma$ can be constructed essentially by taking the graph of
cylinders of $\Gamma$ and adding some leaves. 
We prove this by constructing a FIDL-$\Lambda$ with $\Delta$ as its
commuting graph, as in \fullref{thm:DL}.
This will take some work.

Assume \eqref{item:raagtreegraphconditions}.
Any graph of groups decomposition of $W_\Gamma$ has underlying graph a
tree because $W_\Gamma$ is generated by torsion elements, so it cannot
surject onto $\mathbb{Z}$. 
Let $T$ be the tree with one vertex for each cylinder in the graph of cylinders and an edge between cylinders if they share a square. 
This is just the underlying graph of the graph of cylinders, where we
have `forgotten' the rigid vertices, in the sense that they are all
of valence two, so we think of them as the midpoint of an edge instead
of as a valence two vertex.
In fact, we will think of $T$ as a subgraph of $\diag(\Gamma)$, whose
edges were defined to be the squares of $\Gamma$. 

We make some claims about the graph structure of $\Gamma$ deduced from
\eqref{item:raagtreegraphconditions}.
Assuming the claims, we finish the proof of \eqref{item:commraagtree}.
After that we will prove the claims. 

\begin{claim}\label{claim:everysquarehassuspension}
  For every square $\{a,b\}\join\{c,d\}$ of $\Gamma$, at least 
one of the diagonals $\{a,b\}$ or $\{c,d\}$ is the pole of a 
non-square suspension that is a maximal thick join.
\end{claim}
\begin{claim}\label{claim:tree}
  $\diag(\Gamma)$ is a tree and $T$ can be identified, via the
map sending a maximal thick join $\{a,b\}\join C$ to the vertex
$\{a,b\}\in\diag(\Gamma)$, with the subtree
  obtained by removing all leaves of $\diag(\Gamma)$,
\end{claim}

Examples illustrating \fullref{claim:suspendedsuspended} are shown in
\fullref{fig:claimsussus}.
In the claim, when $\{p_0,q_0\}$,$\{p_1,q_1\}$,\dots,$\{p_n,q_n\}$ is a
geodesic in $T\subset\diag(\Gamma)$ then for a vertex $v\in \Gamma$
let $I_v:=\{i\mid v\in\{p_i,q_i\}\}$.
Also, let $\even$ denote the even numbers, and $\odd$ the odd. 
\begin{claim}\label{claim:suspendedsuspended}
   If $\{p_0,q_0\}$,$\{p_1,q_1\}$,\dots,$\{p_n,q_n\}$ is a
geodesic in $T$ with $n>0$ and there exists a vertex $z\in\Gamma$  with $\{p_0,q_0,p_n,q_n\}\subset\lk(z)$ then:
\begin{enumerate}[(i)]
\item $n\in\even$\label{item:n_is_even}
\item $I_z=[0,n]\cap\odd$\label{item:z_is_odd}
  \item For all $0<i<n$, there is a cut $\{p_i-q_i\}$ separating
  $\{p_j,q_j\mid j<i\}\setminus \{p_i-q_i\}$ from  $\{p_j,q_j\mid
  i<j\}\setminus \{p_i-q_i\}$. If $i\in \even$ then the cut is
  $\{p_i-q_i\}=p_i\edge z\edge q_i$.\label{item:cuts}
\end{enumerate}
\end{claim}

\begin{figure}[h]
  \centering
  \begin{subfigure}{.45\textwidth}
      \centering
\begin{tikzpicture}[scale=2]\tiny
  \coordinate[label={[label distance=0pt] -90:$z$}] (z) at (0,-.5);
  \coordinate[label={[label distance=0pt] 180:$p_0$}] (p0) at (-.9,.1);
\coordinate[label={[label distance=0pt] 180:$q_0$}] (q0) at (-1.1,-.1);
\coordinate[label={[label distance=0pt] 90:$p_1$}] (p1) at (-0.5,0);
\coordinate[label={[label distance=0pt] -45:$$}] (q1) at (-0.5,0);
\coordinate[label={[label distance=0pt] 45:$p_2$}] (p2) at (.1,.1);
\coordinate[label={[label distance=0pt] -135:$q_2$}] (q2) at (-.1,-.1);
\coordinate[label={[label distance=0pt] 90:$p_3$}] (p3) at (.5,0);
\coordinate[label={[label distance=0pt] -45:$$}] (q3) at (.5,0);
\coordinate[label={[label distance=0pt] 0:$p_4$}] (p4) at (1.1,.1);
\coordinate[label={[label distance=0pt] 0:$q_4$}] (q4) at (.9,-.1);
\coordinate[label={[label distance=0pt] 180:$x$}] (x) at (-1,0);
\coordinate[label={[label distance=0pt] 0:$y$}] (y) at (1,0);
\draw (q0)--(q1)--(p0)--(p1)--cycle;
\draw (q1)--(q2)--(p1)--(p2)--cycle;
\draw (q2)--(q3)--(p2)--(p3)--cycle;
\draw (q3)--(q4)--(p3)--(p4)--cycle;
\draw (q0)--(z)--(p0) (q2)--(z)--(p2) (q4)--(z)--(p4);
\draw (p0)--(x)--(q0) (p4)--(y)--(q4); 
\filldraw (z) circle (.5pt);
\filldraw (p0) circle (.5pt);
\filldraw (q0) circle (.5pt);
\filldraw (p1) circle (.5pt);
\filldraw (p2) circle (.5pt);
\filldraw (q2) circle (.5pt);
\filldraw (p3) circle (.5pt);
\filldraw (p4) circle (.5pt);
\filldraw (q4) circle (.5pt);
\filldraw (x) circle (.5pt);
\filldraw (y) circle (.5pt);
\end{tikzpicture}
\caption{$\Gamma_1$, with $z=q_1=q_3$}
   \end{subfigure}
     \hfill
     \begin{subfigure}{.5\textwidth}
         \centering
\begin{tikzpicture}[scale=2]\tiny
    \coordinate[label={[label distance=-1pt] 180:$(p_0,p_2)$}] (p0p2) at (-1,.5);
  \coordinate[label={[label distance=-1pt] 180:$(p_0,q_2)$}] (p0q2) at (-1,.4);
  \coordinate[label={[label distance=-1pt] 180:$(q_0,p_2)$}] (q0p2) at (-1,.3);
  \coordinate[label={[label distance=-1pt] 180:$(q_0,q_2)$}] (q0q2) at (-1,.2);
  \coordinate[label={[label distance=-1pt,xshift=-2pt] -90:$(p_0,q_0)$}] (p0q0) at (-.8,0);
\coordinate[label={[label distance=-1pt,xshift=-1pt] -90:$(p_1,z)$}] (p1) at (-0.4,0);
\coordinate[label={[label distance=-1pt] -90:$(p_2,q_2)$}] (p2) at (0,0);
\coordinate[label={[label distance=-1pt] 0:$(p_1,p_3)$}] (p1p3) at (0,.5);
\coordinate[label={[label distance=-1pt,xshift=1pt] -90:$(p_3,z)$}] (p3) at (.4,0);
 \coordinate[label={[label distance=-1pt] 0:$(p_4,p_2)$}] (p4p2) at (1,.5);
  \coordinate[label={[label distance=-1pt] 0:$(p_4,q_2)$}] (p4q2) at (1,.4);
  \coordinate[label={[label distance=-1pt] 0:$(q_4,p_2)$}] (q4p2) at (1,.3);
  \coordinate[label={[label distance=-1pt] 0:$(q_4,q_2)$}] (q4q2) at (1,.2);
  \coordinate[label={[label distance=-1pt,xshift=2pt] -90:$(p_4,q_4)$}] (p4q4) at
  (.8,0);
    \coordinate[label={[label distance=-1pt] 180:$(x,p_1)$}] (xp1) at (-1,0);
  \coordinate[label={[label distance=-1pt] 180:$(x,z)$}] (xz) at (-1,.1);
  \coordinate[label={[label distance=-1pt] 0:$(y,z)$}] (yz) at (1,.1);
  \coordinate[label={[label distance=-1pt] 0:$(y,p_3)$}] (yp3) at (1,0);
  \draw[very thick, color=blue] (p1)--(p0q0)  (p2)--(p1) (p2)--(p3) (p3)--(p4q4) ;
  \draw (p1)--(p0p2) (p1)--(p0q2) (p1)--(q0p2)
(p1)--(q0q2);
\draw (p2)--(p1p3);
\draw (p3)--(p4p2) (p3)--(p4q2) (p3)--(q4p2)
(p3)--(q4q2);
\draw (xz)--(p0q0)--(xp1) (yz)--(p4q4)--(yp3);
\filldraw (p0p2) circle (.5pt);
\filldraw (p0q2) circle (.5pt);
\filldraw (q0p2) circle (.5pt);
\filldraw (q0q2) circle (.5pt);
\filldraw (p0q0) circle (.5pt);
\filldraw (p1) circle (.5pt);
\filldraw (p2) circle (.5pt);
\filldraw (p1p3) circle (.5pt);
\filldraw (p3) circle (.5pt);
\filldraw (p4p2) circle (.5pt);
\filldraw (p4q2) circle (.5pt);
\filldraw (q4p2) circle (.5pt);
\filldraw (q4q2) circle (.5pt);
\filldraw (p4q4) circle (.5pt);
\filldraw (xp1) circle (.5pt);
\filldraw (xz) circle (.5pt);
\filldraw (yz) circle (.5pt);
\filldraw (yp3) circle (.5pt);
\end{tikzpicture}
\caption{$\diag(\Gamma_1)$}
\end{subfigure}

\begin{subfigure}{.45\textwidth}
    \centering
\begin{tikzpicture}[scale=2]\tiny
  \coordinate[label={[label distance=0pt] 90:$z$}] (0) at (0,0.5);
\coordinate[label={[label distance=0pt] 180:$q_0$}] (1) at (-1.25,0.0);
\coordinate[label={[label distance=0pt] 180:$q_2$}] (2) at (-0.75,0.0);
\coordinate[label={[label distance=0pt] 180:$a$}] (3) at (-0.25,0.0);
\coordinate[label={[label distance=0pt] 0:$b$}] (4) at (0.25,0.0);
\coordinate[label={[label distance=0pt] 0:$p_6$}] (5) at (0.75,0.0);
\coordinate[label={[label distance=0pt] 0:$p_8$}] (6) at (1.25,0.0);
\coordinate[label={[label distance=0pt] -90:$q_1$}] (7) at (-0.75,-0.5);
\coordinate[label={[label distance=0pt] -90:$q_3$}] (8) at (-0.25,-0.5);
\coordinate[label={[label distance=0pt] -90:$q_5$}] (9) at (0.25,-0.5);
\coordinate[label={[label distance=0pt] -90:$q_7$}] (10) at
(0.75,-0.50);
\coordinate[label={[label distance=0pt] -90:$x$}] (x) at (-1.25,-0.50);
\coordinate[label={[label distance=0pt] -90:$y$}] (y) at (1.25,-0.50);
\draw (0)--(1);
\draw (0)--(2);
\draw (0)--(3);
\draw (0)--(4);
\draw (0)--(5);
\draw (0)--(6);
\draw (1)--(7);
\draw (2)--(7);
\draw (2)--(8);
\draw (3)--(7);
\draw (3)--(8);
\draw (3)--(9);
\draw (4)--(8);
\draw (4)--(9);
\draw (4)--(10);
\draw (5)--(9);
\draw (5)--(10);
\draw (6)--(10);
\draw (3)--(x)--(1) (4)--(y)--(6);
\filldraw (0) circle (.5pt);
\filldraw (1) circle (.5pt);
\filldraw (2) circle (.5pt);
\filldraw (3) circle (.5pt);
\filldraw (4) circle (.5pt);
\filldraw (5) circle (.5pt);
\filldraw (6) circle (.5pt);
\filldraw (7) circle (.5pt);
\filldraw (8) circle (.5pt);
\filldraw (9) circle (.5pt);
\filldraw (10) circle (.5pt);
\filldraw (x) circle (.5pt);
\filldraw (y) circle (.5pt);
\end{tikzpicture}
    \caption{$\Gamma_2$, with $z=p_1=p_3=p_5=p_7$, $a=p_0=p_2=p_4$, and $b=q_4=q_6=q_8$}
     \end{subfigure}
     \hfill
     \begin{subfigure}{.5\textwidth}
         \centering
\begin{tikzpicture}[scale=2]\tiny
  \coordinate[label={[label distance=-1pt] -90:$(z,q_1)$}] (0) at (-0.75,-0.0);
\coordinate[label={[label distance=-1pt] 90:$(q_0,q_2)$}] (1) at (-0.75,0.25);
\coordinate[label={[label distance=-1pt] 90:$(a,q_0)$}] (2) at (-1,-0);
\coordinate[label={[label distance=-1pt] -90:$(z,q_3)$}] (3) at (-0.25,-0.0);
\coordinate[label={[label distance=-1pt] 90:$(a,q_2)$}] (4) at (-0.5,-0.0);
\coordinate[label={[label distance=-1pt] 90:$(b,q_2)$}] (5) at (-0.25,0.250);
\coordinate[label={[label distance=-1pt] 90:$(a,b)$}] (6) at (0.0,-0.0);
\coordinate[label={[label distance=-1pt] -90:$(z,q_5)$}] (7) at (0.25,-0.0);
\coordinate[label={[label distance=-1pt] 90:$(a,p_6)$}] (8) at (0.25,0.25);
\coordinate[label={[label distance=-1pt] 90:$(p_6,b)$}] (9) at (0.5,-0.0);
\coordinate[label={[label distance=-1pt] -90:$(z,q_7)$}] (10) at (0.75,-0.0);
\coordinate[label={[label distance=-1pt] 90:$(p_8,b)$}] (11) at (1,-0.0);
\coordinate[label={[label distance=-1pt] 90:$(p_6,p_8)$}] (12) at (.75,0.250);
\coordinate[label={[label distance=-1pt] -90:$(q_1,q_3)$}] (13) at (-0.5,-0.250);
\coordinate[label={[label distance=-1pt] -90:$(q_3,q_5)$}] (14) at (0,-0.25);
\coordinate[label={[label distance=-1pt] -90:$(q_5,q_7)$}] (15) at
(0.5,-0.25);
\coordinate[label={[label distance=-1pt] -90:$(x,z)$}] (16) at (-1,-0.25);
\coordinate[label={[label distance=-1pt] 0:$(x,q_1)$}] (17) at (-.95,-0.2);
\coordinate[label={[label distance=-1pt] -90:$(y,z)$}] (18) at (1,-0.25);
\coordinate[label={[label distance=-1pt] 180:$(y,q_7)$}] (19) at (0.95,-0.2);
\draw (0)--(1);
\draw[very thick,color=blue] (2)--(0)--(4)--(3)--(6)--(7)--(9)--(10)--(11);
\draw (3)--(5);
\draw (4)--(13);
\draw (6)--(14);
\draw (7)--(8);
\draw (9)--(15);
\draw (10)--(12);
\draw (16)--(2)--(17) (18)--(11)--(19);
\filldraw (0) circle (.5pt);
\filldraw (1) circle (.5pt);
\filldraw (2) circle (.5pt);
\filldraw (3) circle (.5pt);
\filldraw (4) circle (.5pt);
\filldraw (5) circle (.5pt);
\filldraw (6) circle (.5pt);
\filldraw (7) circle (.5pt);
\filldraw (8) circle (.5pt);
\filldraw (9) circle (.5pt);
\filldraw (10) circle (.5pt);
\filldraw (11) circle (.5pt);
\filldraw (12) circle (.5pt);
\filldraw (13) circle (.5pt);
\filldraw (14) circle (.5pt);
\filldraw (15) circle (.5pt);
\filldraw (16) circle (.5pt);
\filldraw (17) circle (.5pt);
\filldraw (18) circle (.5pt);
\filldraw (19) circle (.5pt);
\end{tikzpicture}
         \caption{$\diag(\Gamma_2)$}
     \end{subfigure}
  \caption{Two graphs as in \fullref{claim:suspendedsuspended} with
    their diagonal graphs containing the geodesic
    $\{p_0,q_0\}, \{p_1,q_1\},\dots,\{p_n,q_n\}$.}
  \label{fig:claimsussus}
\end{figure}

\begin{claim}\label{claim:evendistance}
If $\{a,b\}\neq\{a,c\}$ are vertices in
$\diag(\Gamma)$ then the distance between them is even and
every second vertex on the geodesic between them contains $a$. 
\end{claim}
\begin{claim}\label{claim:uniquesuspension}
  Every vertex of $\Gamma$ is suspended in some maximal suspension.
  A vertex that is not a suspension point of any maximal suspension is contained in a unique maximal suspension.
\end{claim}
\medskip

Assuming the claims, we construct a FIDL--$\Lambda$.

By \fullref{claim:tree}, $\diag(\Gamma)$ is a tree, so it can be
2--colored 0/1 such that neighboring vertices have different colors.
By \fullref{claim:evendistance}, if $v$ is in the support of two
different vertices of $\diag(\Gamma)$ then they have even distance in
$\diag(\Gamma)$, so they have the same color.
Thus, the 2--coloring on $\diag(\Gamma)$ induces a 2--coloring of
$\Gamma$. 

Let $\{a,b\}$ be a vertex of $T$, corresponding to a maximal
suspension $\{a,b\}\join\{c_0,\dots,c_{n-1}\}$.
Let $\Lambda_{a,b}\subset \Gamma^c$ be a graph on vertices
$\{c_0,\dots,c_{n-1}\}$, constructed as follows.
Add an edge in $\Lambda_{a,b}$ between $c_i$ and $c_j$ if
$\{c_i,c_j\}$ is a vertex of $T$.
Call these `mandatory edges'. 
Condition \eqref{item:nosuspensioncycle} implies the mandatory edges
form a forest.
Add additional `discretionary' edges to make  $\Lambda_{a,b}$ a tree. 
Let $T_i\subset T$ be the vertices with color $i$.
Let $\Lambda_0$ be the union of trees $\Lambda_{a,b}$ such that
$\{a,b\}\in T_1$, so that $\supp(\Lambda_0)$ consists of vertices
of $\Gamma$ colored 0. 
Define $\Lambda_1$ analogously for $T_0$.
By \fullref{claim:uniquesuspension},
$\Lambda:=\Lambda_0\sqcup\Lambda_1$ spans $\Gamma$.
Vertices of $\Lambda_i$ are colored $i$, so they are not adjacent in
$\Gamma$. 

We prove $\Lambda_0$ is a tree by induction. 
Pick a vertex $\{a_0,b_0\}\in T_1$.
By construction,  $\Lambda_{a_0,b_0}$ is a tree.
Now suppose that $\bigcup_{i=0}^{m-1}\Lambda_{a_i,b_i}$ is a tree,
where the set of vertices $\{a_i,b_i\}\in T_1$ with $i<m$ is the intersection of a convex subset of
$T$ with $T_1$.
Let $\{a_{m},b_{m}\}\in T_1$ that is
closest to the existing set.
Since $\diag(\Gamma)$ is a tree, the convexity condition implies that there is a unique vertex $\{c,d\}\in T_0$ separating $\{a_{m},b_{m}\}$ from all of the
other $\{a_i,b_i\}$, and there is at least one index $j<m$ such that
$\{c,d\}$ is a neighbor of $\{a_{j},b_{j}\}$ in $T$.
Thus, $c\edge d$ is a mandatory edge in $\Lambda_{a_{m},b_{m}}$ and
in any such $\Lambda_{a_{j},b_{j}}$.
We have $c\edge d\subset \Lambda_{a_m,b_m}\cap\bigcup_{i=0}^{m-1}\Lambda_{a_i,b_i}$.
Conversely, suppose, for some $j<m$, that
$\Lambda_{a_{j},b_{j}}\cap\Lambda_{a_{m},b_{m}}\neq
\emptyset$, so there is a vertex $z\in\Gamma$ with $\{a_{j},b_{j},a_{m},b_{m}\}\subset\lk(z)$.
Since $\{c,d\}$ is the penultimate vertex on the unique geodesic from
$\{a_{j},b_{j}\}$ to $\{a_{m},b_{m}\}$ in $\diag(\Gamma)$,
\fullref{claim:suspendedsuspended} says $z\in\{c,d\}$.
Thus, $\Lambda_{a_m,b_m}\cap\bigcup_{i=0}^{m-1}\Lambda_{a_i,b_i}$ is
exactly the edge $c\edge d$, but the  union of two trees with exactly
an edge in common is a tree, so $\Lambda_0$ is a tree by induction,
and $\Lambda_1$ similarly. 

Now we check that conditions $\mathcal{R}_3$  and $\mathcal{R}_4$ of
\fullref{thm:DL} are satisfied.

Suppose $\{a,b\}\join\{c,d\}$ is a
square in $\Gamma$.
\fullref{claim:everysquarehassuspension} says either $\{a,b\}$ or
$\{c,d\}$ is the pole of
a maximal suspension; suppose $\{a,b\}\join C$ is a maximal suspension
with $\{c,d\}\subset C$.
Then $\hull_\Lambda\{a,b\}=\{a,b\}$ and
$\hull_\Lambda\{c,d\}\subset\Lambda_{a,b}\subset C$, so 
 $\hull_\Lambda\{a,b\}\join\hull_\Lambda\{c,d\}\subset
\{a,b\}\join C\subset \Gamma$.
Condition $\mathcal{R}_3$ is satisfied. 

Suppose condition $\mathcal{R}_4$ is not satisfied.
Then there is a shortest simple cycle $\gamma$ in $\Gamma$ containing an edge $e$ that
is not in a square with vertices in $\hull_\Lambda(\gamma)$.
Such a $\gamma$ is induced, because if $\gamma$ contains vertices $v$
and $w$ that
are adjacent in $\Gamma$ but not in $\gamma$ then we could surger
$\gamma$ into two simple cycles $\gamma'$ and $\gamma''$ that are each
an arc of $\gamma$ plus the edge $v\edge w$.
Both are strictly shorter than $\gamma$, and one of them, say $\gamma'$,
contains $e$, but
$\hull_\Lambda(\gamma')\subset\hull_\Lambda(\gamma)$, so $e$ is not
contained in a square with vertices in $\hull_\Lambda(\gamma')$.
This would contradict minimality of $\gamma$.

Let $u\edge v\edge w\edge x$ be the subsegment of $\gamma$ such that
$e=v\edge w$.
If $u\edge v$ and $w\edge x$ are contained in a common suspension then
so is $u\edge x$, and $\{u,w\}\join\{v,x\}$ is a square, contradicting the definition of
$e$.
Thus, we may assume that there is not a suspension containing
all three of the edges.
The argument will be to produce a cut through $v$ or $w$ separating $u$
from $x$.
Since $\gamma$ is a loop, it must pass back through the cut again at
some other vertex $v'$, which will lead to a contradiction with the
assumption that $\gamma$ is induced.
Thus, there is no witness to the failure of $\mathcal{R}_4$.

{\bf Case 1:} 
There is a maximal suspension
$\sigma$ containing  $u\edge v\edge w$ and a different maximal
suspension $\sigma'$ containing $v\edge  w\edge x$.
Suppose the distance between $\sigma$ and $\sigma'$ in $T$ is minimal
among pairs with these properties. 

{\bf Case 1a:}
Suppose $\sigma$ and $\sigma'$ are adjacent in
$T$.
Their intersection is the join of their poles.
If $\{u,w\}$ is the pole of $\sigma$ then all four of $u$, $v$, $w$,
and $x$ are contained in $\sigma'$, contrary to hypothesis, and
similarly for $\sigma'$, so  there are vertices $v'\neq w'$ such that $\sigma\cap\sigma'=\{v,v'\}\join\{w,w'\}$.
If no cut separates $u$ from $x$ then
$\{u\}\join\{v,v'\}\join\{w,w'\}\join\{x\}$ is a non-square rigid
vertex, which is impossible, so there is at least one cut of the form $\{v-v'\}$ or
$\{w-w'\}$ separating $u$ from $x$.
These cases are symmetric, so assume it is $\{v-v'\}$.
Since $v$ is adjacent to $u$ and $w$, if $\gamma$ passes through $v'$
then $\gamma$ fails to be induced.
However, the only way for $\gamma$ to pass back through the cut
avoiding $v'$ is if the cut is a cut 2--path $v\edge
u'\edge v'$ with $u'\neq w$.
In this case, $\gamma$ passes back through $u'$, which is adjacent to
$v$, so $\gamma$ is not induced. 

{\bf Case 1b:}  Suppose $\sigma$ and $\sigma'$ are not adjacent in $T$.
For $\{a,b\}\in T$, let
$\sigma_{a,b}:=\{a,b\}\join(\mathrm{link}(a)\cap\mathrm{link}(b))$ be
the maximal suspension of $\Gamma$ with pole $\{a,b\}$. 
Let $\{p_0,q_0\}$, \dots, $\{p_n,q_n\}$ be the geodesic in $T$ between
$\sigma=\sigma_{p_0,q_0}$ and $\sigma'=\sigma_{p_n,q_n}$.
Vertices $v$ and $w$ cannot both be suspended in the same suspension since they
are connected by an edge and $\Gamma$ is triangle free, so, 
  up to symmetry, there are two subcases: either $w$ is a suspended
  and $v$ is suspension in both $\sigma$ and $\sigma'$, or $v$ is a
  suspension point of $\sigma$ and suspended in $\sigma'$ and the
  opposite is true for $w$.

  In the first subcase, $\{p_0,q_0\}=\{v,v'\}$, for some $v'$, and $\{p_n,q_n\}=\{v,x\}$. 
 \fullref{claim:suspendedsuspended} says $n\in\even$,
 $w\in\{p_1,q_1\}$, and there is a cut $\{p_1-q_1\}$ separating
 $\{v,v'\}\setminus \{p_1-q_1\}$ from $\{v,x\}\setminus\{p_1-q_1\}$.
 Since $v\in\{v,v'\}\cap\{v,x\}$, the cut is the cut 2--path
 $p_1\edge v\edge q_1$, and it separates $v'$ from $x$.
 It also separates $u$ from $x$, since $u$ is adjacent to $v'$.
 So, $\gamma$ crosses the cut once through $v$  to get from $u$ to $x$, and must
 cross back again later, but both of the other vertices of the cut are
 adjacent to $v$,
 contradicting that $\gamma$ is induced.

 In the second subcase we have that $w$ is suspended in $\sigma_{p_{n-1}q_{n-1}}$, and, 
 by minimality of $n$, $u\notin \sigma_{p_1,q_1}$ and $x\notin\sigma_{p_{n-1},q_{n-1}}$.
 Apply \fullref{claim:suspendedsuspended} to
 $\{p_0,q_0\}$,\dots,$\{p_{n-1},q_{n-1}\}$ with $z=w$, which gives that
 $w\in\{p_1,q_1\}$ and there is a cut $\{p_1-q_1\}$ in $\Gamma$ separating
 $\{p_0,q_0\}\setminus \{p_1-q_1\}$ from $\{p_{n-1},q_{n-1}\}\setminus
 \{p_1-q_1\}$.
 Since $u\notin \sigma_{p_1,q_1}$ and
 $x\notin\sigma_{p_{n-1},q_{n-1}}$, $\{p_1-q_1\}$ separates $u$ from
 $x$.
 Since $v$ is adjacent to $u$ and $v$ and $x$ are both suspended in
 $\sigma'$ with two common neighbors $\{p_n,q_n\}\neq \{p_1,q_1\}$,
 the cut $\{p_1-q_1\}$ is a cut 2--path $p_1\edge
 v\edge q_1$.
 Thus, $\gamma$ crosses $\{p_1-q_1\}$ once through $v$ and then back
 again through a vertex adjacent to $v$, contradicting that it is induced.
 
{\bf Case 2:} The path $u\edge v\edge w$ is not
contained in a maximal suspension.   
Let $\{p_0,q_0\}$,\dots,$\{p_n,q_n\}$ be a geodesic in $T$ such that $u,v\in \sigma_{p_0,q_0}$ and
  $v,w\in\sigma_{p_n,q_n}$, and such that $n>0$ is minimal among such
  geodesics.
  Edges sharing a common vertex cannot be contained in adjacent
  maximal suspensions without both being contained in at least one of the
  two, so $n\geq 2$. 
  Minimality implies $|\{u,v\}\cap \{p_0,q_0\}|=1$ and $\{u,v\}\cap\{p_1,q_1\}=\emptyset$ and
  $|\{v,w\}\cap \{p_n,q_n\}|=1$ and
  $\{v,w\}\cap\{p_{n-1},q_{n-1}\}=\emptyset$.
  
 If $v$ is suspended in both $\sigma_{p_0,q_0}$ and
   $\sigma_{p_n,q_n}$ then \fullref{claim:suspendedsuspended} says
   $v\in\{p_1,q_1\}$, but this contradicts minimality of $n$.

  If $v$ is suspended in $\sigma_{p_0,q_0}$ but not in 
    $\sigma_{p_n,q_n}$
  then $v\in\{p_n,q_n\}$ is adjacent to both $p_{n-1}$ and $q_{n-1}$.
  Apply \fullref{claim:suspendedsuspended} to the geodesic subsegment
  from 
  $\{p_0,q_0\}$ to $\{p_{n-1},q_{n-1}\}$. Again, this implies
  $v\in\{p_1,q_1\}$, contradicting minimality of $n$.
  The symmetric argument works if $v$ is suspended in
  $\sigma_{p_n,q_n}$ but not in $\sigma_{p_0,q_0}$.

   Finally, suppose $v$ is a suspension point in both  $\sigma_{p_0,q_0}$ and
   $\sigma_{p_n,q_n}$.
    If $n=2$ then apply \fullref{claim:suspendedsuspended} with $z=p_1$.
   If $n>2$ then 
   apply \fullref{claim:suspendedsuspended} to the geodesic subsegment
   $\{p_1,q_1\}$,\dots,$\{p_{n-1},q_{n-1}\}$ with $z=v$.
   In both cases we get that $p_{n-1}\edge v\edge q_{n-1}$ is a
   cut 2--path, and 
the loop $\gamma$ contains $v$ and  has vertices $u$ and $w$ on different sides of the
cut, so it must cross the cut a second time in either $p_{n-1}$ or
$q_{n-1}$, both of which are adjacent to $v$, contradicting that $\gamma$ is induced. 

\medskip

Up to symmetry, this accounts for all possibilities, so we have
constructed a FIDL--$\Lambda$.
By \fullref{thm:DL}, the commuting graph $\Delta$ of $\Lambda$ defines
an index 4 visible RAAG subgroup of $W_\Gamma$.
It follows from Behrstock and Neumann's \cite{BehNeu08} work that
$\Delta$ must be a tree, because tree RAAGs are not quasiisometric to
nontree RAAGs, but we can say explicitly what $\Delta$ is from the
construction: it is a tree built by starting from $T$ and at each
vertex $\{a,b\}\in T$ adding a new leaf for each discretionary
edge of $\Lambda_{a,b}$. 
This completes the proof of \eqref{item:raagtreegraphconditions}
$\implies$\eqref{item:commraagtree}, modulo the claims.

\begin{claimproof}[\ref{claim:everysquarehassuspension}]
Any square  $\{a,b\}\join\{c,d\}$ in $\Gamma$ is a thick join, so it
is contained in some maximal thick join $\sigma$, which,
by \eqref{item:cylinders} is a non-square suspension.
Since $\Gamma$ is triangle-free either $\{a,b\}$ or $\{c,d\}$ is the
pole of $\sigma$.
\end{claimproof}

\begin{claimproof}[\ref{claim:tree}]
  If $\{a,b\}\join\{c,d\}$ in $\Gamma$ is a square and 
 $\{a,b\}\join\{x,y\}$ is another square then $|\{c,d,x,y\}|\geq
3$.
The thick join $\{a,b\}\join\{c,d,x,y\}$ is contained in a maximal thick join, which is a suspension, by \eqref{item:cylinders}, so $\{a,b\}$ is the pole of a
maximal thick join that is a non-square suspension.

Every non-leaf of $\diag(\Gamma)$ is a pair of vertices of $\Gamma$ that are the
diagonal of more than one square, so, as above, is the pole of a
non-square suspension.
These correspond to vertices of $T$, so all of the non-leaf vertices
of $\diag(\Gamma)$ are identified with vertices of $T$.
Furthermore, adjacent vertices of $\diag(\Gamma)$ span a square, so
$T$--vertices of $\diag(\Gamma)$ are adjacent in $\diag(\Gamma)$ if
and only if they are adjacent in $T$. 
Thus, we identify $T$ with a subgraph of $\diag(\Gamma)$ that includes
all non-leaves of $\diag(\Gamma)$.
By \fullref{claim:everysquarehassuspension}, $\diag(\Gamma)$ has no
isolated edges, so every leaf is attached to one of the $T$--vertices of
$\diag(\Gamma)$.
Thus, $\diag(\Gamma)$ is a tree and $T$ is a subtree containing all
non-leaves.  
 
For $\{a,b\}\in T$ there is a maximal thick join $\{a,b\}\join
\{c_0,c_1,c_2,\dots\}$ in $\Gamma$.
Since $\Gamma$ is triangle free, there are induced squares
$\{a,b\}\join\{c_i,c_j\}$ for all $i\neq j$. 
Thus, $\{a,b\}$ has at least
three neighbors in $\diag(\Gamma)$.
In particular, it is not a leaf.
\end{claimproof}

\begin{claimproof}[\ref{claim:suspendedsuspended}]
  The proof is by induction on $n$.
  Since $\Gamma$ is triangle-free, $n>1$.
  Suppose $n=2$.
  Since $\{p_0,q_0\}$ and
  $\{p_2,q_2\}$ are distinct vertices in $T$,
  $|\{p_0,q_0,p_2,q_2\}|\geq 3$, so
$\{p_0,q_0,p_2,q_2,z\}$ is a set of at
  least 4 essential vertices that is not a square, since $z$ is
  adjacent to at least three of the others.
  By \eqref{item:rigid}, the set must be separated by a cut, since otherwise there would be a
  non-(virtually $\mathbb{Z}^2$) rigid vertex in the graph of
  cylinders.
  By \eqref{item:cylinders}, for all $i$, vertices $p_i$ and $q_i$ have at
  least three common neighbors, so they cannot be separated by a cut.
  Now,  $\{z,p_1,q_1\}\join\{p_0,q_0,p_2,q_2\}\subset\Gamma$, but thick joins in $\Gamma$ are suspensions,  by \eqref{item:cylinders}, so $|\{z,p_1,q_1\}|=2$. 
  Thus, $z\in\{p_1,q_1\}$, and there is a cut $\{p_1-q_1\}$ separating
  $\{p_0,q_0\}\setminus\{p_1-q_1\}$ from $\{p_2,q_2\}\setminus\{p_1-q_1\}$.
  This proves the claim when $n=2$.

  Now consider a geodesic of length $n>2$ as in the statement of the
  claim, and suppose the claim is true for all geodesics in $T$ of
  length strictly less than $n$.
  As in the base case, for all $i$ there is no cut separating $p_i$
  from $q_i$, but there must be a cut separating
  $\{p_0,q_0,p_n,q_n,z\}$, and the cut must contain $z$. 
  Thus, there is some cut
  containing $z$ and separating a subset of $\{p_0,q_0\}$ from a
  subset of $\{p_n,q_n\}$. 
  The geodesic $\{p_0,q_0\}$,\dots,$\{p_n,q_n\}$ in $T$ gives a chain of
  squares in $\Gamma$ with adjacent squares sharing a diagonal.
  Such a set is 2--connected, and the only potential cut
  pairs are $\{p_i,q_i\}$ for $0<i<n$.
  Thus:
  \begin{equation}
    \label{eq:d}
    \begin{minipage}{.9\textwidth}
      There exists $0<i_0<n$ such that there is a cut $\{p_{i_0}-q_{i_0}\}\supset\{p_{i_0},q_{i_0},z\}$ separating the set $\{p_0,q_0,p_n,q_n,z\}$.
  \end{minipage}
 \tag{$\dagger$}
  \end{equation}
  
The goal will be to split the $T$--geodesic into two subsegments at
index $i_0$
and apply the induction
hypothesis to both sides. 
We will also use the following fact:
\begin{equation}
  \label{eq:dd}
  \begin{minipage}{.9\textwidth}
     No cut of the form $\{p_i-q_i\}$  separates the set
  $\{p_j,q_j\mid j<i\}\setminus \{p_i-q_i\}$ or separates the set
  $\{p_j,q_j\mid j>i\}\setminus \{p_i-q_i\}$.
    \end{minipage}
 \tag{$\ddagger$}
\end{equation}
To see \eqref{eq:dd}, first note that for all $j<i$,
$\{p_j,q_j\}\nsubset\{z,p_i,q_i\}$.
This is true because $\{p_j,q_j\}\neq\{p_i,q_i\}$, and $p_j$ and $q_j$
are not adjacent, but $z$ is adjacent to both of $p_i$ and $q_i$.
Thus, for all $j<i$ at least one of $p_j$ or $q_j$ survives in
$\{p_j,q_j\mid j<i\}\setminus \{p_i-q_i\}$ and any vertices with
consecutive indices are adjacent. 

The combination of \eqref{eq:d} and \eqref{eq:dd} implies there exists $0<i_0<n$ such that there is a cut $\{p_{i_0}-q_{i_0}\}\supset\{p_{i_0},q_{i_0},z\}$ 
separating $\{p_i,q_i\mid i<i_0\}\setminus
\{p_{i_0}-q_{i_0}\}$ from $\{p_i,q_i\mid i>i_0\}\setminus
\{p_{i_0}-q_{i_0}\}$.
Consider two cases based on whether $z$ is in $\{p_{i_0},q_{i_0}\}$.

{\bf Case $z\notin\{p_{i_0},q_{i_0}\}$:}
Then the cut $\{p_{i_0}-q_{i_0}\}$ is a cut 2--path $p_{i_0}\edge
z\edge q_{i_0}$.
Consider the geodesic subsegments
$\{p_0,q_0\}$,\dots,$\{p_{i_0},q_{i_0}\}$ and
$\{p_{i_0},q_{i_0}\}$,\dots,$\{p_{n},q_{n}\}$.
Each is strictly shorter than the one we started with and has all
vertices in the supports of its two endpoints contained in $\lk(z)$.
Apply the induction hypothesis.
Then $i_0\in \even$ and $n-i_0\in\even$, so $n\in \even$.
We have the even number $i_0$ is not in $I_z$, and by induction $z$
occurs in the support of each odd index of each geodesic subsegment,
so $I_z=[0,n]\cap\odd$.
Finally, the induction hypothesis for the first subsegment says that
for every $i<i_0$ there is a cut $\{p_i-q_i\}$ separating
$\{p_j,q_j\mid 0\leq j<i\}\setminus\{p_i-q_i\}$ from $\{p_j,q_j\mid
i<j\leq i_0\}\setminus\{p_i-q_i\}$, but \eqref{eq:dd} says
$\{p_i-q_i\}$ does not separate $\{p_j,q_j\mid
i<j\leq n\}\setminus\{p_i-q_i\}$, so it separates $\{p_j,q_j\mid 0\leq j<i\}\setminus\{p_i-q_i\}$ from $\{p_j,q_j\mid
i<j\leq n\}\setminus\{p_i-q_i\}$.
A symmetric argument on the second geodesic subsegment finishes the
proof of \ref{item:cuts}.

{\bf Case $z\in\{p_{i_0},q_{i_0}\}$:} Without loss of generality,
assume $z=p_{i_0}$.
First suppose $1<i_0<n-1$, and consider the geodesic subsegments $\{p_0,q_0\}$,\dots,$\{p_{i_0-1},q_{i_0-1}\}$ and
$\{p_{i_0+1},q_{i_0+1}\}$,\dots,$\{p_{n},q_{n}\}$.
Again, each is strictly shorter than the one we started with and has
all vertices in the support of its two endpoints contained in
$\lk(z)$, so induct.
We get $i_0-1\in\even$ and $n-(i_0-1)\in\even$, so $n\in\even$ and
$i_0\in\odd$.
We have that $I_z$ contains the odd index $i_0$, since $z=p_{i_0}$, and the
induction hypothesis for the two subsegments gives that $I_z$ contains
every other odd index as well.
The proof of \ref{item:cuts} is the same as in the previous case.

If $i_0=1$ or $i_0=n-1$ the proof is the same, except one of the
geodesic subsegments is a single vertex, so the induction is only
necessary on the other. 
\end{claimproof}

\begin{claimproof}[\ref{claim:evendistance}]
  By \fullref{claim:tree}, $\diag(\Gamma)$ is a tree, so there is a
  unique geodesic  $\{a,b\}=\{p_0,q_0\}$,\dots,$\{p_n,q_n\}=\{a,c\}$.
  The support of a single edge has 4 vertices, so $n>1$.
  If $n=2$ the claim is true.
  We cannot have $n=3$, because then $\{p_1,q_1\}\join\{p_2,q_2\}$ is
  a square in $\Gamma$ contained in $\lk(a)$, contradicting that
  $\Gamma$ is triangle-free. 
  If $n>3$, apply \fullref{claim:suspendedsuspended} to
  $\{p_1,q_1\},\dots,\{p_{n-1},q_{n-1}\}$ with $z=a$.
\end{claimproof}

\begin{claimproof}[\ref{claim:uniquesuspension}]
  Since $\Gamma$ is triangle-free, the intersection of suspensions
  corresponding to adjacent vertices in $T$ is exactly the square
  formed by the join of the two poles.
  By \eqref{item:cylinders}, every vertex of $\Gamma$ is contained
  in some maximal suspension.
  If $v\in\Gamma$ is a suspension point of a maximal suspension
  $\sigma$, then it is suspended in each neighbor of $\sigma$ in $T$,
  so every vertex of $\Gamma$ is suspended in some suspension. 

  If $z\in \sigma\cap\sigma'$ is contained in two maximal
  suspensions, but is not in the pole of either, then $\sigma$ and
  $\sigma'$ do not give adjacent vertices in $T$. 
\fullref{claim:suspendedsuspended} says $z$ occurs in the support of
each odd index vertex on the $T$-geodesic from $\sigma$ to $\sigma'$.
Thus, every vertex that is contained in distinct maximal suspensions
is in the pole of some maximal suspension. 
\end{claimproof}
\end{proof}

\begin{example}\label{ex:nonplanar_raag_tree_redux}
 The graph of \fullref{fig:nonplanar_raag_tree}
  is shown again with its graph of cylinders in
  \fullref{fig:nonplanar_raag_tree_redux}.
  The hypotheses of \fullref{maintheorem} are satisfied,
  because the central cylinder vertex has basic plane pattern:
  $W_{\{3,4\}\join\{0,1,2,8\}}$ has an index 4 subgroup $\langle
  w,x,y\rangle\times\langle z\rangle\cong \free_3\times \mathbb{Z}$ given
  by $w:=s_0s_8$, $x:=s_1s_8$, $y:=s_2s_8$, and $z:=s_3s_4$, such that
  restriction to the subgroup induces a pattern preserving
  quasiisometry to $\free_3\times\mathbb{Z}$ with plane pattern generated
  by   $\{\langle w\rangle\times\langle z\rangle,\, \langle
  x\rangle\times\langle z\rangle,\, \langle y\rangle\times\langle
  z\rangle\}$, so the projected line pattern is generated by $\{w,x,y\}$ in $\langle
  w,x,y\rangle$, which is basic.
  \begin{figure}[h]
  \begin{subfigure}{.35\textwidth}
      \centering
\begin{tikzpicture}[scale=.5]\tiny
    \coordinate[label={[label distance=0pt] -90:$4$}]  (0) at (0,0);
    \coordinate[label={[label distance=0pt] 45:$8$}]  (1) at (0,1);
    \coordinate[label={[label distance=0pt] 90:$3$}]  (2) at (0,2);
    \coordinate[label={[label distance=0pt] 180:$2$}]  (3) at (-2,2);
    \coordinate[label={[label distance=0pt] 180:$1$}]  (4) at (-2,0);
    \coordinate[label={[label distance=0pt] 0:$0$}]  (5) at (2,1);
    \coordinate[label={[label distance=0pt] 90:$7$}]  (6) at (-1,1.5);
    \coordinate[label={[label distance=-2pt] -45:$6$}]  (7) at (-1,.5);
    \coordinate[label={[label distance=0pt] 90:$5$}]  (8) at (1,1);

    \filldraw (0) circle (2pt) (1) circle (2pt) (2) circle (2pt) (3)
    circle (2pt) (4) circle (2pt) (5) circle (2pt) (6) circle (2pt)
    (7) circle (2pt) (8) circle (2pt);
    \draw (0)--(1)--(2)--(4)--(7)--(1)--(8)--(5)--(0)--(3)--(2)
    (3)--(6)--(1) (0)--(4) (2)--(5);
  \end{tikzpicture}
      \subcaption{$\Gamma$}
  \label{fig:nonplanar_raag_tree_numbered}
\end{subfigure}
  \begin{subfigure}{.6\textwidth}
\centering
  \begin{tikzpicture}\tiny
 \coordinate[label={[label distance=0pt] 180:$W_{\{3,4\}}\times
   W_{\{0,1,2,8\}}$}]  (0) at (0,0);
 \coordinate[label={[label distance=0pt] 0:$W_{\{0,8\}}\times W_{\{3,4,5\}}$}]  (1) at (0:1);
\coordinate[label={[label distance=0pt] 0:$W_{\{1,8\}}\times W_{\{3,4,6\}}$}]  (2) at (-120:1);
  \coordinate[label={[label distance=0pt] 0:$W_{\{2,8\}}\times
    W_{\{3,4,7\}}$}]  (3) at (120:1);
  \draw (0)--(1) (0)--(2) (0)--(3);
  \filldraw (0) circle (1pt) (1) circle (1pt) (2) circle (1pt) (3)
  circle (1pt) ($(0)!.5!(1)$) circle (1pt) ($(0)!.5!(2)$) circle (1pt) ($(0)!.5!(3)$) circle (1pt) ;
\end{tikzpicture}
  \subcaption{ graph of cylinders of $W_\Gamma$.}
  \label{fig:jsjgoc_nonplanar_raag_tree_numbered}
\end{subfigure}
\caption{Graph and graph of cylinders for
  \fullref{ex:nonplanar_raag_tree_redux}.}
\label{fig:nonplanar_raag_tree_redux}
\end{figure}
\end{example}
\section{Further remarks}
The implication \eqref{item:jsjgoc} $\implies$ \eqref{item:qiraagtree}
of \fullref{maintheorem}
can be proven directly as in \cite{Cas10}.
The description of the JSJ decomposition
means these groups, like the tree RAAGs, are `two-line tubular groups
with bounded height change', which are all quasiisometric
\cite[Example~5.1]{Cas10}.
The implication \eqref{item:raagtreegraphconditions} $\implies$ \eqref{item:jsjgoc} 
is also easy.
It should be possible to generalize Edletzberger's
\cite{Edl24} visual description of the graph of cylinders in the
case that $\Gamma$ has triangles.
In this case one could hope to replace the `no-triangles' hypothesis on
$\Gamma$ by `no-icosahedra' and retain the equivalence between
\eqref{item:qiraagtree}, \eqref{item:jsjgoc}, and  \eqref{item:raagtreegraphconditions}.
Graphs with no icosahedra are the most general family for which such a
classification would be interesting, because if $\Gamma$ contains an
icosahedron then $\mathbb{Z}^3\leq W_\Gamma$, so $W_\Gamma$ is not
quasiisometric to a tree RAAG. 

The equivalence of \eqref{item:commraagtree} to the others
if the `no-triangles'
hypothesis is relaxed is another matter. 
 \fullref{thm:DL} is an application of conditions of Dani and
 Levcovitz \cite{DanLev24} specialized to the triangle-free case.
Their work partially handles more general situations, but when there are triangles there are more conditions and they are only
 necessary, not sufficient, for $\Lambda$ to spawn a visible RAAG
 subgroup.
 The `sufficient' direction would be needed to extend 
\fullref{maintheorem} to handle triangles.

\enlargethispage{1em}


\bibliographystyle{hypersshort}
\bibliography{treeRAAG}

\providecommand{\bysame}{\leavevmode\hbox to3em{\hrulefill}\thinspace}
\providecommand{\MR}{\relax\ifhmode\unskip\space\fi MR }
\providecommand{\MRhref}[2]{%
  \href{http://www.ams.org/mathscinet-getitem?mr=#1}{#2}
}
\providecommand{\href}[2]{#2}
\providecommand{\doi}[1]{doi: #1}
\begin{thebibliography}{10}

\bibitem{BehNeu08}
J.~A. Behrstock and W.~D. Neumann,
  \emph{\href{http://dx.doi.org/10.1215/S0012-7094-08-14121-3}{Quasi-isometric
  classification of graph manifold groups}}, Duke Math. J. \textbf{141} (2008),
  no.~2, 217--240.

\bibitem{CasKazZak19}
M.~Casals-Ruiz, I.~Kazachkov, and A.~Zakharov,
  \emph{\href{http://dx.doi.org/10.4171/rmi/1061}{On commensurability of
  right-angled {A}rtin groups {I}: {RAAG}s defined by trees of diameter 4}},
  Rev. Mat. Iberoam. \textbf{35} (2019), no.~2, 521--560.

\bibitem{CasKazZak21}
M.~Casals-Ruiz, I.~Kazachokov, and A.~Zakharov,
  \emph{\href{http://dx.doi.org/10.1017/S0305004119000537}{On commensurability
  of right-angled {A}rtin groups {II}: {RAAG}s defined by paths}}, Math. Proc.
  Cambridge Philos. Soc. \textbf{170} (2021), no.~3, 559--608.

\bibitem{Cas10}
C.~H. Cashen, \emph{\href{http://dx.doi.org/10.4171/GGD/92}{Quasi-isometries
  between tubular groups}}, Groups Geom. Dyn. \textbf{4} (2010), no.~3,
  473--516.

\bibitem{Cas16}
C.~H. Cashen, \emph{\href{http://dx.doi.org/10.2140/agt.2016.16.621}{Splitting
  line patterns in free groups}}, Algebr. Geom. Topol. \textbf{16} (2016),
  no.~2, 621--673.

\bibitem{CasDanEdl}
C.~H. Cashen, P.~Dani, A.~Edletzberger, and A.~Karrer, \emph{{RAAG}edy
  right-angled {C}oxeter groups}, in preparation.

\bibitem{CasEdl}
C.~H. Cashen and A.~Edletzberger,
  \emph{\href{http://dx.doi.org/10.48550/arXiv.2405.04817}{Visual right-angled
  {A}rtin subgroups of two-dimensional right-angled {C}oxeter groups}}, J.
  Group Theory (in press).

\bibitem{CasMac11}
C.~H. Cashen and N.~Macura,
  \emph{\href{http://dx.doi.org/10.2140/gt.2011.15.1419}{Line patterns in free
  groups}}, Geom. Topol. \textbf{15} (2011), no.~3, 1419--1475.

\bibitem{CasMar17}
C.~H. Cashen and A.~Martin,
  \emph{\href{http://dx.doi.org/10.1017/S0305004116000530}{Quasi-isometries
  between groups with two-ended splittings}}, Math. Proc. Cambridge Philos.
  Soc. \textbf{162} (2017), no.~2, 249--291.

\bibitem{Cla14}
M.~Clay, \emph{\href{https://doi.org/10.1142/S0218196714500350}{When does a
  right-angled {A}rtin group split over {$\mathbb{Z}$}?}}, Internat. J. Algebra
  Comput. \textbf{24} (2014), no.~6, 815--825.

\bibitem{DanLev24}
P.~Dani and I.~Levcovitz,
  \emph{\href{http://dx.doi.org/10.2140/agt.2024.24.755}{Right-angled {A}rtin
  subgroups of right-angled {C}oxeter and {A}rtin groups}}, Algebr. Geom.
  Topol. \textbf{24} (2024), 755--802.

\bibitem{DanTho15}
P.~Dani and A.~Thomas,
  \emph{\href{https://doi.org/10.1090/S0002-9947-2014-06218-1}{Divergence in
  right-angled {C}oxeter groups}}, Trans. Amer. Math. Soc. \textbf{367} (2015),
  no.~5, 3549--3577.

\bibitem{DanTho17}
P.~Dani and A.~Thomas, \emph{\href{https://arxiv.org/abs/1402.6224}{Bowditch's
  {JSJ} tree and the quasi-isometry classification of certain {C}oxeter
  groups}}, J. Topol. \textbf{10} (2017), no.~4, 1066--1106.

\bibitem{Edl24}
A.~Edletzberger,
  \emph{\href{http://dx.doi.org/10.4171/GGD/779}{Quasi-isometries for certain
  right-angled {C}oxeter groups}}, Groups Geom. Dyn. \textbf{18} (2024), no.~3,
  1037--1098.

\bibitem{Edl24thesis}
A.~Edletzberger,
  \emph{\href{http://dx.doi.org/10.25365/thesis.76555}{Quasi-isometries for
  two-dimensional right-angled {C}oxeter groups}}, Ph.D. thesis, University of
  Vienna, 2024.

\bibitem{MR3758992}
V.~Guirardel and G.~Levitt, \emph{J{SJ} decompositions of groups},
  Ast\'{e}risque (2017), no.~395.

\bibitem{MR3761106}
J.~Huang and B.~Kleiner,
  \emph{\href{http://dx.doi.org/10.1215/00127094-2017-0042}{Groups
  quasi-isometric to right-angled {A}rtin groups}}, Duke Math. J. \textbf{167}
  (2018), no.~3, 537--602.

\bibitem{Mar20}
A.~Margolis, \emph{\href{http://dx.doi.org/10.4171/GGD/584}{Quasi-isometry
  classification of {RAAG}s that split over cyclic subgroups}}, Groups Geom.
  Dyn. \textbf{14} (2020), 1351--1417.

\bibitem{MihTsc09}
M.~Mihalik and S.~Tschantz,
  \emph{\href{http://dx.doi.org/10.4171/GGD/53}{Visual decompositions of
  {C}oxeter groups}}, Groups Geom. Dyn. \textbf{3} (2009), no.~1, 173--198.

\bibitem{NguTra19}
H.~T. Nguyen and H.~C. Tran,
  \emph{\href{http://dx.doi.org/10.2140/agt.2019.19.3075}{On the coarse
  geometry of certain right-angled {C}oxeter groups}}, Algebr. Geom. Topol.
  \textbf{19} (2019), no.~6, 3075--3118.

\bibitem{Ota92}
J.-P. Otal, \emph{\href{http://dx.doi.org/10.1112/jlms/s2-46.1.123}{Certaines
  relations d'equivalence sur l'ensemble des bouts d'un groupe libre}}, J.
  London Math. Soc. \textbf{s2-46} (1992), no.~1, 123--139.

\bibitem{Pap05}
P.~Papasoglu,
  \emph{\href{http://dx.doi.org/10.4007/annals.2005.161.759}{Quasi-isometry
  invariance of group splittings}}, Ann. of Math. (2) \textbf{161} (2005),
  no.~2, 759--830.

\end{thebibliography}
\end{document}